\documentclass[12pt]{amsart}
\setlength{\baselineskip}{1.8em}
\usepackage{graphicx}
\usepackage{amscd}
\usepackage[top=1in,bottom=1in,left=1.15in,right=1.15in]{geometry}
\usepackage{amsmath,amssymb,amsfonts}
\usepackage{amsthm}
\usepackage{color}
\usepackage{hyperref}
\usepackage{tikz-cd}
\usepackage{epstopdf}
\usetikzlibrary{matrix}
\usepackage{verbatim}
\usepackage{array}
\usepackage{mathtools}
\usepackage{enumitem}
\newtheorem{theo}{Theorem}[section]
\newtheorem{prop}[theo]{Proposition}
\newtheorem{lemma}[theo]{Lemma}

\newtheorem{cor}[theo]{Corollary}

\theoremstyle{definition}
\newtheorem{definition}[theo]{Definition}
\newtheorem{example}[theo]{Example}
\newcommand{\cal}{\mathcal}


\allowdisplaybreaks

\def\diaCrossP{\unitlength.08em
  \begin{minipage}{15\unitlength}
    \begin{picture}(15,15)
      \put(0,0){\vector(1,1){15}}
      \qbezier(15,0)(15,0)(10,5)
      \qbezier(5,10)(0,15)(0,15)
      \put(0,15){\vector(-1,1){0}}
    \end{picture}
  \end{minipage}
}

\def\diaCrossN{\unitlength.08em
  \begin{minipage}{15\unitlength}
    \begin{picture}(15,15)
      \put(15,0){\vector(-1,1){15}}
      \qbezier(0,0)(0,0)(5,5)
      \qbezier(10,10)(15,15)(15,15)
      \put(15,15){\vector(1,1){0}}
    \end{picture}
  \end{minipage}
}

\def\diaCross{\unitlength.08em
  \begin{minipage}{15\unitlength}
    \begin{picture}(15,15)
      \put(0,0){\vector(1,1){15}}
      \put(15,0){\vector(-1,1){15}}
    \end{picture}
  \end{minipage}
}

\def\diaCircle{\unitlength.1em
  \begin{minipage}{15\unitlength}
    \begin{picture}(15,15)
      \put(7.5,10){\circle{8}}
      \put(7.5,0){\vector(0,1){5}}
       \put(7.5,5){\line(0,1){5.5}}
    \end{picture}
  \end{minipage}
}

\usetikzlibrary{decorations.markings}

\tikzset{->-/.style={decoration={
  markings,
  mark=at position .5 with {\arrow{>}}},postaction={decorate}}}

\tikzset{-<-/.style={decoration={
  markings,
  mark=at position .5 with {\arrow{<}}},postaction={decorate}}}

\begin{document}
\title{Alexander polynomial and spanning trees}
\author{Yuanyuan Bao \and Zhongtao Wu}
\address{
Graduate School of Mathematical Sciences, University of Tokyo, 3-8-1 Komaba, Tokyo 153-8914, Japan
}
\email{bao@ms.u-tokyo.ac.jp}

 \address{
Department of Mathematics, The Chinese University of Hong Kong, Shatin, Hong Kong
}
\email{ztwu@math.cuhk.edu.hk}

\begin{abstract}

Inspired by the combinatorial constructions in earlier work of the authors that generalized the classical Alexander polynomial to a large class of spatial graphs with a balanced weight on edges, we show that the value of the Alexander polynomial evaluated at $t=1$ gives the weighted number of the spanning trees of the graph.  

\end{abstract}
\keywords{Alexander polynomial, MOY graph, weighted number, spanning tree.}
\subjclass[2010]{Primary 57M27, 57M15}

\maketitle

\section{Introduction}

In \cite{BW}, we studied an Alexander polynomial $\Delta_{(G,c)}(t)$ for a certain class of spatial graphs in the 3-sphere $S^3$. Having a standard definition in terms of abelian covers of graph complement \cite[Section 5]{bao}, the invariant is foremost a topological invariant that naturally generalizes the classical Alexander polynomial for knots and links.  On the other hand, the equivalent definitions in terms of Kauffman states and MOY calculus discovered by the authors reveal several interesting combinatorial flavour of the invariant.  In particular, it is shown that the value of the Alexander polynomial evaluated at $t=1$ is unchanged under crossing changes of the graph diagrams.  Consequently, for a spatial graph $(G, c)$, $\Delta_{(g,c)} :=\Delta_{(G,c)}(1)$ is an intrinsic invariant of the underlying abstract graph $(g, c)$ of $(G, c)$.


In this paper, we go one step further and relate the invariant $\Delta_{(g,c)}$ with a certain count of spanning trees of the graph.  In order to state the main result precisely, we introduce a few notations and terms first.

\begin{definition}
Given a vertex $r$ in a connected directed graph $\Gamma$, an {\it oriented spanning tree of $\Gamma$ rooted at $r$} is a spanning subgraph $T$ that satisfies the following $3$ conditions:
\begin{enumerate}
\item Every vertex $v\neq r$ has in-degree $1$.
\item The root $r$ has in-degree $0$.
\item $T$ has no oriented cycle.
\end{enumerate}

\end{definition}

Denote  $\cal{T}_r(\Gamma)$ the set of all oriented spanning trees of $\Gamma$ rooted at $r$.  One can then count the number of such spanning trees.  If there is in addition a weight function $w: E\rightarrow \mathbb{Z}$ on the edge set, we can count instead the weighted number of spanning trees.

\begin{definition}\label{Def:weightednumber}
Define the weight of each spanning tree $T$ by
\begin{equation}\label{treeweight}
w(T) := \prod_{e\in E(T)} w(e),
\end{equation}
where $E(T)$ is the edge set of $T$. Then,  the {\it weighted number of spanning trees rooted at $r$} is:
\begin{equation}\label{weightednumber}
N(\Gamma, w, r):=\sum_{T\in \cal{T}_r(\Gamma)} w(T).
\end{equation}
\end{definition}

In this paper, we will be mostly interested in weight functions satisfying a certain balanced property.
We review the related definitions below and refer the reader to \cite[Definition 2.1]{BW} for more details.


\begin{definition}\label{Def:moygraph}
\rm
\begin{enumerate}

\item An \emph{abstract MOY graph} is a directed graph that equipped with \emph{a positive balanced weight/coloring} $c: E\to \mathbb{N}$ such that for each vertex $v$,
\begin{equation}\label{balancedcoloring}
\sum_{\text{$e$: pointing into $v$}} c(e)=\sum_{\text{$e$: pointing out of $v$}} c(e).
\end{equation}

\item An {\it MOY graph diagram} in $\mathbb{R}^2$ is an immersion of an abstract MOY graph into $\mathbb{R}^2$, with crossing information and a \emph{transverse orientation}: through each vertex $v$, there is a straight line $L_v$ that separates the edges entering $v$ and the edges leaving $v$.

\begin{figure}[h!]
\begin{tikzpicture}[baseline=-0.65ex, thick, scale=1]
\draw (-1, -1.75) [->-] to (0, -0.75);
\draw (-0.5, -1.75) [->-] to (0, -0.75);
\draw (0, -0.75) [->] to (1, 0.25);
\draw (1, -1.75) [->-] to (0, -0.75);
\draw (0, -0.75) [->] to (-1, 0.25);
\draw (0, -0.75) [->] to (0.5, 0.25);
\draw (0, -0.75) node[circle,fill,inner sep=1.5pt]{};
\draw [dashed] (-0.7, -0.75)--(0.7, -0.75);
\draw (1.25, -0.75) node{$L_v$};
\draw (0.1, -1.5) node{$......$};
\draw (-0.1, 0) node{$......$};
\end{tikzpicture}
\label{fig:e3}
\end{figure}
\end{enumerate}
\end{definition}

An MOY graph $(G,c)$ is an equivalence class of MOY graph diagrams of $(g, c)$ under a certain topological equivalence relation (a.k.a. the {\it Reidemeister moves}). The Alexander polynomial $\Delta_{(G,c)}(t)$ is defined using an MOY graph diagram and proved to be a topological invariant for the equivalence class $(G,c)$.  


\medskip
\noindent {\bf Convention.} Throughout this paper, we only study connected graphs. As notational convention, we use $\Gamma, w$, and $w(T)$ to denote a general directed graph, a weight on $\Gamma$, and the weight of a spanning tree $T$ of $\Gamma$, respectively. In contrast, we reserve the letters $g, c$ and $c(T)$ for an abstract MOY graph, its balanced weight/coloring, and the weight of a spanning tree $T$ of $g$, respectively.\\

With the balanced property on the weight function $c$, one can show that the weighted number of spanning trees of a given abstract MOY graph $(g, c)$ is in fact independent of the choice of the root $r$ (Proposition \ref{BalancedWeightCount}). Thus we denote this number by $N(g, c)$, and our main theorem identifies it with the value $\Delta_{(g,c)}$.

\begin{theo}\label{MainTheorem}
For an abstract MOY graph $(g,c)$, we have
$$\Delta_{(g,c)}=N(g,c).$$

\end{theo}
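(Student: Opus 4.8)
The plan is to realize $\Delta_{(g,c)}$ as the determinant of a reduced weighted Laplacian of the directed graph $g$ and then to read off the spanning-tree count via the directed Matrix--Tree theorem. Since the value $\Delta_{(g,c)}=\Delta_{(G,c)}(1)$ is, as recalled above, invariant under crossing changes, I am free to compute it from \emph{any} MOY graph diagram $D$ of $(g,c)$, and the spatial embedding will play no role once $t=1$. I would start from the combinatorial (Kauffman state / MOY) formula for $\Delta_{(G,c)}(t)$ established in \cite{BW}, in which the polynomial is presented as (a unit times) the determinant of an Alexander matrix $M(t)$ whose entries are monomials in $t$ dictated by the colors $c(e)$ at the crossings and vertices of $D$, with a fixed normalization (a deleted row/column) accounting for the indeterminacy.

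First I would specialize to $t=1$. At each crossing the two monomials that distinguish the over-strand from the under-strand collapse to a common value, which is precisely the diagram-level mechanism behind crossing-change invariance; consequently $M(1)$ no longer records the embedding, and its nonzero entries are governed solely by the colors $c(e)$ on the edges incident to each vertex together with the balanced relation \eqref{balancedcoloring}. The central step is then to identify $M(1)$, after elementary row and column operations and a careful bookkeeping of signs, with a reduced Laplacian $L_r=D-A$ of the directed graph $g$: the diagonal entry at a vertex $v$ is its weighted degree (well defined because \eqref{balancedcoloring} makes the weighted in- and out-degrees equal), the off-diagonal $(u,v)$ entry is $-\sum_{e:u\to v}c(e)$, and the row/column fixed by the normalization corresponds to the choice of a root vertex $r$.

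With this identification in hand, the directed weighted Matrix--Tree theorem applies. Because \eqref{balancedcoloring} forces the weighted in-degree and out-degree at every vertex to coincide, the $(r,r)$-minor of $L=D-A$ equals the corresponding minor of its transpose $D-A^{\mathsf T}$, and the latter is exactly the matrix whose principal minor enumerates the oriented spanning trees rooted at $r$ (every non-root vertex having in-degree $1$). Thus
\[
\det(L_r)=\sum_{T\in\cal{T}_r(g)}\ \prod_{e\in E(T)}c(e)=\sum_{T\in\cal{T}_r(g)}c(T)=N(g,c,r).
\]
Finally, Proposition \ref{BalancedWeightCount} removes the dependence on the root, so $N(g,c,r)=N(g,c)$, and combining the displays gives $\Delta_{(g,c)}=N(g,c)$.

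The hard part will be the middle step: pinning down the exact entries of $M(1)$ at the higher-valence MOY vertices, not merely at the bivalent crossings, and checking that the signs and the elementary operations turn it precisely into the reduced Laplacian $L_r$ with the deleted datum corresponding to an honest root vertex. A cleaner but essentially equivalent route would bypass the determinant and instead establish a weight-preserving bijection between the Kauffman states surviving at $t=1$ and the trees in $\cal{T}_r(g)$, under which a state contributing $\prod_e c(e)$ is sent to the tree $T$ with $c(T)$ equal to that product; the same vertex-local sign analysis remains the crux in either formulation, and connectedness (our standing convention) is what guarantees the relevant minor is nonvacuous.
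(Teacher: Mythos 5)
Your overall strategy (present $\Delta_{(G,c)}(1)$ as a determinant, massage it into a reduced Laplacian, invoke the directed Matrix--Tree theorem, then use Proposition \ref{BalancedWeightCount} to forget the root) is a genuinely different route from the paper's, but as written it has a real gap at exactly the point you flag as ``the hard part,'' and that gap is not a routine verification. The determinant implicit in the combinatorial formula of \cite{BW} is the expansion of a matrix indexed by \emph{crossings and regions} of the diagram: for a plane MOY diagram it has size $|\operatorname{Cr}(G)|\times|\operatorname{Cr}(G)|=|E|\times|E|$, whereas the reduced Laplacian $L_r$ is $(|V|-1)\times(|V|-1)$. These sizes differ whenever the graph has a cycle, so no amount of sign bookkeeping and elementary row/column operations alone will ``identify'' one with the other; you would need to prove a separate combinatorial theorem relating the crossings-by-regions determinant to the vertex Laplacian (morally, a statement mixing the graph and its planar dual). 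Your description of $M(1)$ as a vertex-indexed matrix with weighted degrees on the diagonal is an assumption about the structure of the Alexander matrix that does not match the state-sum presentation actually available. In addition, for a non-plane diagram the state sum acquires extra crossings from edge--edge double points, and reducing to a plane diagram is itself nontrivial; the paper spends all of Section 4 on skein relations (Proposition \ref{skeinrelation} together with \cite[Theorem 4.1(iv)]{BW} at $t=1$) precisely to handle this reduction, and your appeal to crossing-change invariance covers only crossing \emph{switches}, not the resolution of crossings into planar MOY vertices.

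For comparison, the paper's proof splits into two steps neither of which appears in your writeup in executable form: for plane diagrams, Theorem \ref{Correspondence} gives an explicit weight-preserving bijection $\cal{T}_r(G)\to S(G,\delta)$ built from a spanning tree and its dual spanning tree $T^*$ (the dual tree is what dictates the east/west corner assignments, and a counting argument on a disk bounded by a putative cycle shows injectivity onto honest states); for general diagrams, the skein relations reduce to the plane case. Your closing remark that one could instead ``establish a weight-preserving bijection between the Kauffman states surviving at $t=1$ and the trees'' is in fact the paper's planar argument, but in your proposal it is an unproved aside rather than a proof. To rescue your Laplacian route you would need either (a) a genuinely vertex-indexed presentation of the Alexander module of the graph complement whose $t=1$ specialization you compute explicitly at higher-valence MOY vertices, or (b) an independent identity equating the crossings-by-regions determinant at $t=1$ with $\det L_r$; neither is supplied.
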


As a corollary, we can establish the non-vanishing property for the Alexander polynomial  $\Delta_{(G,c)}(t)$ as a consequence of the existence of spanning trees, thus generalizing an earlier result of the authors \cite[Theorem 5.6]{BW}, which treated the case that $G$ is plane.

\begin{cor}\label{Nonvanishing}
Suppose $G$ is a connected MOY graph with a positive balanced weight $c$.  Then $\Delta_{(G,c)}(1)>0$.  In particular, this implies $\Delta_{(G,c)}(t)\neq 0$.
\end{cor}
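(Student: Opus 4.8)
The plan is to reduce the positivity statement to the existence of a single oriented spanning tree, and then to extract such a tree from the balanced weight condition. First I would invoke Theorem~\ref{MainTheorem} to rewrite
$$\Delta_{(G,c)}(1)=\Delta_{(g,c)}=N(g,c),$$
where $(g,c)$ denotes the underlying abstract MOY graph of $(G,c)$. By Definition~\ref{Def:weightednumber}, $N(g,c)=\sum_{T\in\mathcal{T}_r(g)}c(T)$ for any fixed root $r$, and each summand $c(T)=\prod_{e\in E(T)}c(e)$ is a product of strictly positive integers, hence $c(T)>0$. Therefore $N(g,c)\geq 0$, with $N(g,c)>0$ as soon as $\mathcal{T}_r(g)\neq\emptyset$ for some choice of $r$; by Proposition~\ref{BalancedWeightCount} the count does not depend on $r$, so it suffices to exhibit a single oriented spanning tree rooted at any convenient vertex.

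The heart of the argument is therefore to produce one oriented spanning tree, and this is where the balanced condition~\eqref{balancedcoloring} enters. I would show that $g$ is strongly connected. The positive balanced weight $c$ is exactly a nowhere-zero circulation on $g$: the conservation law~\eqref{balancedcoloring} asserts that the inflow equals the outflow at every vertex. A standard flow decomposition then expresses $c$ as a nonnegative integer combination of indicator functions of directed cycles, and since $c(e)>0$ for every edge $e$, each edge must lie on at least one directed cycle. Consequently the two endpoints of every edge lie in a common strongly connected component; as $g$ is connected by our standing convention, there can be only one such component, so $g$ is strongly connected.

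Given strong connectivity, every vertex is reachable from a fixed root $r$ along directed paths. A directed breadth-first search from $r$ then yields a spanning subgraph $T$ in which each vertex $v\neq r$ receives exactly one incoming discovery edge while $r$ receives none, and $T$ contains no oriented cycle because it is a tree. Thus $T$ satisfies the three defining conditions of an oriented spanning tree rooted at $r$, so $\mathcal{T}_r(g)\neq\emptyset$, and hence $\Delta_{(G,c)}(1)=N(g,c)>0$. The final assertion is then immediate: a polynomial taking a nonzero value at $t=1$ cannot be identically zero, whence $\Delta_{(G,c)}(t)\neq 0$.

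I expect the main obstacle to be the strong-connectivity step, namely justifying cleanly that a positive balanced weight forces every edge onto a directed cycle and that weak connectivity thereby upgrades to strong connectivity. The remaining ingredients, the reduction through Theorem~\ref{MainTheorem}, the positivity of each $c(T)$, and the construction of the arborescence, are routine once strong connectivity is established.
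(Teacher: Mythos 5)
Your proposal is correct and follows essentially the same skeleton as the paper: reduce via Theorem~\ref{MainTheorem} to the positivity of $N(g,c)$, observe that each tree weight $c(T)$ is a product of positive integers, and then show $\mathcal{T}_r(g)\neq\emptyset$ by proving that a connected graph with a positive balanced weight is strongly connected and that a strongly connected graph admits an oriented spanning tree with any root. The only substantive difference lies in how you prove strong connectivity: you decompose the positive balanced weight, viewed as a nowhere-zero circulation, into directed cycles and conclude that every edge lies on a directed cycle, whereas the paper runs a cut argument --- if $S$ is the set of vertices reachable from $v$ and $S\neq V$, summing the balanced condition~\eqref{balancedcoloring} over $S$ forces an edge leaving $S$, a contradiction. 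Both are standard and correct; the paper's cut argument is slightly more self-contained (it needs no flow decomposition theorem), while your cycle decomposition gives the marginally stronger conclusion that every edge lies on a directed cycle. The remaining steps (the arborescence from a directed search, and the final deduction that a polynomial nonzero at $t=1$ is not identically zero) match the paper.
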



\vspace{3mm}
\noindent{\bf Acknowledgements.}
We would like to thank Xian'an Jin for helpful discussions. The first named author is partially supported by JSPS KAKENHI Grant Number JP20K14304.  The second named author is partially supported by grant from the Research Grants Council of Hong Kong Special
Administrative Region, China (Project No. 14309017 and 14301819).

\section{Matrix tree theorem}

Kirchhoff's matrix tree theorem is a classical result that allows one to determine the number of spanning trees by simply computing the determinant of an appropriate matrix associated to the graph.  In this section, we review the theorem in the weighted directed graph setting.  As an application, we prove the independence of the weighted number of spanning trees on the choice of root for balanced weight.

\begin{definition}\label{Def:Laplacian}
Suppose $(\Gamma, w)$ is a weighted directed graph with vertex set $V=\{v_1, v_2, \cdots, v_n\}$.    The $n\times n$ {\it Laplacian matrix}  $L$ is given by
$$L_{ij}=\begin{cases}
-a_{ij} &  \text{if } i\neq j \\

\sum\limits_{k=1}^n a_{kj} & \text{if } i=j

\end{cases}$$
where
$$a_{ij}=\begin{cases}
\sum\limits_{\{e \,|\, e \text{ is an edge from } v_i \text{ to } v_j\}}w(e) & \text{if } i\neq j \\
0 & \text{if } i=j
\end{cases}.
$$

\end{definition}

Fix a vertex $v_r$ in $\Gamma$, and let $L_r$ be the Laplacian matrix of $\Gamma$ with the $r^{th}$ row and column removed.  The matrix tree theorem asserts:

\begin{theo}[Matrix Tree Theorem]\label{MatrixTreeTheorem}

Let $(\Gamma, w)$ be a weighted directed graph.  Then $$\det(L_r)=N(\Gamma, w, v_r),$$
where the right hand side is the weighted number of the oriented spanning trees rooted at $v_r$.
\end{theo}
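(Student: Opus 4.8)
The plan is to express the reduced Laplacian $L_r$ as a product of two incidence-type matrices and then invoke the Cauchy--Binet formula, so that its determinant expands into a sum indexed by $(n-1)$-element edge subsets, each of which I can match against an oriented spanning tree rooted at $v_r$.

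First I would record a factorization of the full matrix $L$. Denoting by $t(e)$ and $h(e)$ the tail and head of a directed edge $e$, Definition \ref{Def:Laplacian} says precisely that an edge $e$ from $v_p$ to $v_q$ contributes $-w(e)$ to the entry $L_{pq}$ and $+w(e)$ to the diagonal entry $L_{qq}$, and nothing else. Summing over edges gives
$$L=\sum_{e\in E} w(e)\,\bigl(\mathbf{e}_{h(e)}-\mathbf{e}_{t(e)}\bigr)\,\mathbf{e}_{h(e)}^{\mathsf T}=B_-\,W\,B_+^{\mathsf T},$$
where $\mathbf{e}_i$ is the $i^{th}$ standard basis vector, $W=\mathrm{diag}\bigl(w(e)\bigr)_{e\in E}$, the $n\times|E|$ matrix $B_+$ has $(B_+)_{v,e}=1$ when $v=h(e)$ and $0$ otherwise, and the signed incidence matrix $B_-$ has $(B_-)_{v,e}=+1$ when $v=h(e)$, $-1$ when $v=t(e)$, and $0$ otherwise. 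Removing the $r^{th}$ row and column of $L$ amounts to deleting row $r$ from each of $B_\pm$; writing $\widetilde B_\pm$ for the resulting $(n-1)\times|E|$ matrices, I obtain $L_r=\widetilde B_-\,W\,\widetilde B_+^{\mathsf T}$.

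I would then apply the Cauchy--Binet formula to this product, yielding
$$\det(L_r)=\sum_{S\subseteq E,\ |S|=n-1}\Bigl(\prod_{e\in S}w(e)\Bigr)\,\det\bigl(\widetilde B_-[S]\bigr)\,\det\bigl(\widetilde B_+[S]\bigr),$$
where $[S]$ denotes the square submatrix on the columns indexed by $S$. The factor $\det(\widetilde B_+[S])$ is nonzero exactly when $e\mapsto h(e)$ is a bijection from $S$ onto $V\setminus\{v_r\}$, i.e. when every vertex other than $v_r$ has in-degree $1$ in $S$ while $v_r$ has in-degree $0$ (the first two conditions in the definition of an oriented spanning tree); in that case it is a permutation matrix and hence has determinant $\pm1$. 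On the other hand, $\widetilde B_-[S]$ is the reduced signed incidence matrix of the subgraph $S$, and by the classical total-unimodularity argument its determinant is $\pm1$ when $S$ is a spanning tree of the underlying undirected graph and $0$ otherwise. Combining the two conditions, the summand is nonzero precisely when $S\in\cal{T}_{v_r}(\Gamma)$, since for a subgraph with $n-1$ edges satisfying the in-degree conditions, being acyclic as an undirected graph is equivalent to having no oriented cycle.

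The remaining, and most delicate, point is to check that the two sign factors always multiply to $+1$, so that each oriented spanning tree contributes exactly $\prod_{e\in S}w(e)=w(S)$ rather than $-w(S)$. I would establish this sign lemma by exploiting the arborescence structure: for $S\in\cal{T}_{v_r}(\Gamma)$ each non-root vertex $v$ is the head of a unique edge $e_v\in S$, so matching column $e_v$ to row $v$ identifies the \emph{same} permutation $\sigma$ in both $\widetilde B_+[S]$ and $\widetilde B_-[S]$. Under this common relabelling $\widetilde B_+[S]$ becomes the identity (so its determinant is $\mathrm{sgn}(\sigma)$), while $\widetilde B_-[S]$ becomes the matrix with $+1$ on the diagonal and a single $-1$ in row $t(e_v)$ of each column $v$; ordering the vertices by increasing distance from $v_r$ makes this matrix triangular with unit diagonal, so its determinant is also $\mathrm{sgn}(\sigma)$. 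Hence $\det(\widetilde B_-[S])\det(\widetilde B_+[S])=\mathrm{sgn}(\sigma)^2=1$, and summing gives $\det(L_r)=\sum_{S\in\cal{T}_{v_r}(\Gamma)}w(S)=N(\Gamma,w,v_r)$. I expect the sign bookkeeping in this last step to be the main obstacle, the rest being standard linear algebra.
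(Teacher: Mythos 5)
Your proof is correct. Note, however, that the paper does not actually prove Theorem \ref{MatrixTreeTheorem}: it states the result and refers to Chaiken and Tutte, so there is no in-paper argument to match yours against. Your route --- factoring $L_r=\widetilde B_-\,W\,\widetilde B_+^{\mathsf T}$ and expanding by Cauchy--Binet --- is the standard linear-algebra proof of the weighted directed matrix tree theorem, and it is consistent with the paper's conventions: the Laplacian of Definition \ref{Def:Laplacian} has $L_{jj}=\sum_k a_{kj}$ (weighted in-degree) and $L_{pq}=-a_{pq}$, which is exactly what your rank-one decomposition $\sum_e w(e)(\mathbf{e}_{h(e)}-\mathbf{e}_{t(e)})\mathbf{e}_{h(e)}^{\mathsf T}$ produces (parallel edges and self-loops are handled automatically). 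The identification of the nonvanishing terms is right, and your implicit claim that, under the in-degree conditions, undirected acyclicity coincides with the absence of oriented cycles is true (follow the unique in-edges backwards from any vertex; the walk can only stop at $v_r$, so the subgraph is connected with $n-1$ edges, hence a tree). The sign lemma is also handled correctly: the column permutation $\sigma$ hits both determinants, while the depth-ordering of the vertices is a simultaneous row-and-column permutation and costs nothing, so the product of signs is $+1$. By contrast, the cited Chaiken reference proves a more general all-minors theorem by a combinatorial sign-reversing involution on the Leibniz expansion; that buys arbitrary minors at the price of heavier sign bookkeeping, whereas yours is the shorter self-contained argument for the single principal minor needed here. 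One spot worth tightening: total unimodularity of the incidence matrix only gives $\det(\widetilde B_-[S])\in\{0,\pm 1\}$, so you should add the one-line reason it vanishes when $S$ is not a spanning tree (the rows indexed by a connected component of $S$ not containing $v_r$ sum to zero).
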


A proof of the above theorem can be found, for example, in \cite{Ch}\cite{Tutte}.

In general, the weighted number of oriented spanning trees with different roots are not necessarily the same.  Nonetheless, for the most relevant case to our paper, namely, a balanced weight/coloring (Definition \ref{Def:moygraph}), $N(\Gamma, w, v_r)$ is independent of the choice of root.

\begin{prop} \label{BalancedWeightCount}
Suppose $(\Gamma, w)$ is a directed graph with a balanced weight.  We have $$N(\Gamma, w, v_i)=N(\Gamma, w, v_j)$$ for all $v_i, v_j \in V$.  In other words,
the weighted number of oriented spanning trees is independent of the choice of the root $v_r$.

\end{prop}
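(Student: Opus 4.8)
The plan is to deduce the statement from the Matrix Tree Theorem (Theorem \ref{MatrixTreeTheorem}), which already identifies $N(\Gamma,w,v_i)=\det(L_i)$, where $L_i$ is the Laplacian with its $i$-th row and column deleted. Thus it suffices to prove that all the principal minors $\det(L_1),\dots,\det(L_n)$ of the full Laplacian $L$ coincide. The first step is to record two linear relations satisfied by $L$. Directly from Definition \ref{Def:Laplacian}, every column of $L$ sums to zero: fixing $j$, the diagonal entry $L_{jj}=\sum_k a_{kj}$ cancels against $\sum_{i\neq j}L_{ij}=-\sum_{i\neq j}a_{ij}$, since both equal the total in-weight at $v_j$. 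Hence $\mathbf{1}^{T}L=0$, where $\mathbf{1}=(1,\dots,1)^{T}$. The $i$-th row sum, on the other hand, equals the in-weight minus the out-weight at $v_i$; by the balanced condition \eqref{balancedcoloring} this vanishes, so $L\mathbf{1}=0$ as well. In particular $L$ is singular and $\det(L)=0$.

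The key idea is then to examine the adjugate matrix $\operatorname{adj}(L)$, whose $(i,i)$ entry is exactly the principal minor $\det(L_i)$. From the defining identities $L\cdot\operatorname{adj}(L)=\det(L)\,I=0$ and $\operatorname{adj}(L)\cdot L=\det(L)\,I=0$, I read off that every column of $\operatorname{adj}(L)$ lies in the right null space $\ker L$ and every row lies in the left null space of $L$. Because $\Gamma$ is connected, the Laplacian has rank $n-1$, so $\ker L$ is one-dimensional; since $L\mathbf{1}=0$, it is spanned by $\mathbf{1}$, and likewise the left null space is spanned by $\mathbf{1}^{T}$.

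Consequently each column of $\operatorname{adj}(L)$ is a scalar multiple of $\mathbf{1}$ and each row is a scalar multiple of $\mathbf{1}^{T}$, which forces $\operatorname{adj}(L)=\kappa J$ for a single constant $\kappa$, where $J$ is the all-ones matrix. Reading off the diagonal gives $\det(L_i)=\kappa$ for every $i$, and combined with the Matrix Tree Theorem this yields $N(\Gamma,w,v_i)=N(\Gamma,w,v_j)$ for all $i,j$. (If instead $\operatorname{rank}L\le n-2$, then all $(n-1)\times(n-1)$ minors vanish and $\operatorname{adj}(L)=0$, so the conclusion still holds trivially; this is why connectivity, used to pin the rank at exactly $n-1$, is the one place that needs care.)

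I would flag two places as the main obstacle. The first is purely bookkeeping but easy to get backwards: one must check against the specific convention of Definition \ref{Def:Laplacian} which of the row/column sums is automatically zero and which genuinely uses the balanced hypothesis. The second, more substantive point is justifying that both the left and right null spaces are exactly the lines through $\mathbf{1}$, i.e.\ that $\operatorname{rank}L=n-1$; this is where connectivity of $\Gamma$ enters, and it is the crux that upgrades the soft statement ``$\operatorname{adj}(L)$ has its columns and rows in the respective kernels'' to the rigid conclusion ``$\operatorname{adj}(L)$ is a constant matrix.''
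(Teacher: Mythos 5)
Your proposal is correct and follows essentially the same route as the paper: both deduce the claim from the Matrix Tree Theorem by showing that the vanishing row sums ($L\mathbf{1}=0$, from the balanced condition) and column sums ($\mathbf{1}^{T}L=0$, automatic from the definition) force all cofactors of $L$ to be equal. The paper states the cofactor-equality step directly from the zero row/column sums, whereas you route it through the adjugate identities $L\,\operatorname{adj}(L)=\operatorname{adj}(L)\,L=0$ together with a rank dichotomy --- a harmless variation (and note your own dichotomy shows connectivity is not actually needed for the equality, only for nonvanishing).
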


\begin{proof}
Recall that a balanced weight means $\sum\limits_{\text{$e$: pointing into $v$}}  w(e)=\sum\limits_{\text{$e$: pointing out of $v$}} w(e)$ for each vertex $v$.  In terms of the Laplacian matrix $L$ in Definition \ref{Def:Laplacian}, this is equivalent to the identity $\sum\limits_{k=1}^n a_{kj}=\sum\limits_{k=1}^n a_{jk}$ for all $j$; so $L$ has the property that every row and every column sums up to $0$. From $\sum\limits_j L_{ij}=0$, we can readily show that the cofactors of the elements of any particular row of $L$ are all equal.  From $\sum\limits_i L_{ij}=0$, we can likewise deduce that the cofactors of the elements of any particular column of $L$ are all equal.  Hence, all cofactors of $L$ are equal, and the statement follows from Theorem \ref{MatrixTreeTheorem}.


\end{proof}

\begin{example}\label{ExampleSpanningTree}
We consider the directed graph $\Gamma$ with a balanced weight $w$ indicated by numbers drawing near the edges, which is Fig. \ref{Fig:gamma}.
\begin{figure}[h!]
\begin{tikzpicture}[baseline=-0.65ex, thick, scale=1.5]
\draw (0, 2) node[circle,fill,inner sep=1.5pt]{};
\draw (1, 0) node[circle,fill,inner sep=1.5pt]{};
\draw (-1, 0) node[circle,fill,inner sep=1.5pt]{};

\draw (1, 0) [->-] to (0, 2);
\draw (-1, 0) [->-] to (0, 2);
\draw (-1, 0) [->-] to (1, 0);

\draw (0,2)  [->-] to [out=30,in=45] (1,0);
\draw (0,2)  [->-] to [out=150,in=135] (-1,0);
\draw (0, 2.2) node {$v_1$};
\draw (-1.2, -0.2) node {$v_2$};
\draw (1.3, -0.2) node {$v_3$};

\draw (0, -0.3) node {$k$};
\draw (0.1, 1) node {$i+k$};
\draw (-0.7, 1) node {$j$};
\draw (1.3, 1.2) node {$i$};
\draw (-1.5, 1.2) node {$j+k$};

\end{tikzpicture}
\caption{Graph $\Gamma$.}
\label{Fig:gamma}
\end{figure}
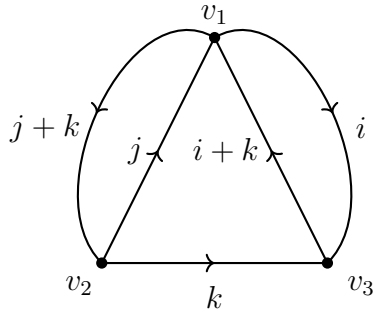


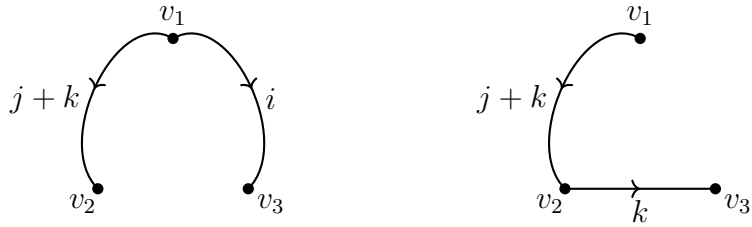
\begin{figure}[h!]
\begin{tikzpicture}[baseline=-0.65ex, thick, scale=1]
\draw (0, 2) node[circle,fill,inner sep=1.5pt]{};
\draw (1, 0) node[circle,fill,inner sep=1.5pt]{};
\draw (-1, 0) node[circle,fill,inner sep=1.5pt]{};

\draw (0,2)  [->-] to [out=30,in=45] (1,0);
\draw (0,2)  [->-] to [out=150,in=135] (-1,0);
\draw (0, 2.3) node {$v_1$};
\draw (-1.2, -0.2) node {$v_2$};
\draw (1.3, -0.2) node {$v_3$};

\draw (1.3, 1.2) node {$i$};
\draw (-1.7, 1.2) node {$j+k$};

\end{tikzpicture}
\hspace{2cm}
\begin{tikzpicture}[baseline=-0.65ex, thick, scale=1]
\draw (0, 2) node[circle,fill,inner sep=1.5pt]{};
\draw (1, 0) node[circle,fill,inner sep=1.5pt]{};
\draw (-1, 0) node[circle,fill,inner sep=1.5pt]{};

\draw (0,2)  [->-] to [out=150,in=135] (-1,0);

\draw (-1, 0) [->-] to (1, 0);
\draw (0, 2.3) node {$v_1$};
\draw (-1.2, -0.2) node {$v_2$};
\draw (1.3, -0.2) node {$v_3$};

\draw (0, -0.3) node {$k$};
\draw (-1.7, 1.2) node {$j+k$};
\end{tikzpicture}
\caption{There are $2$ oriented spanning trees rooted at $v_1$, so $N(\Gamma, w, v_1)=i(j+k)+k(j+k)=(i+k)(j+k)$.}
\label{Fig: root1}
\end{figure}

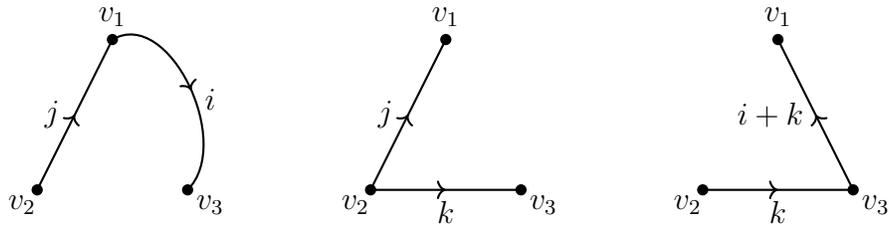
\begin{figure}[h!]
\begin{tikzpicture}[baseline=-0.65ex, thick, scale=1]
\draw (0, 2) node[circle,fill,inner sep=1.5pt]{};
\draw (1, 0) node[circle,fill,inner sep=1.5pt]{};
\draw (-1, 0) node[circle,fill,inner sep=1.5pt]{};

\draw (-1, 0) [->-] to (0, 2);

\draw (0,2)  [->-] to [out=30,in=45] (1,0);

\draw (0, 2.3) node {$v_1$};
\draw (-1.2, -0.2) node {$v_2$};
\draw (1.3, -0.2) node {$v_3$};

\draw (1.3, 1.2) node {$i$};
\draw (-0.8, 1) node {$j$};

\end{tikzpicture}
\hspace{1cm}
\begin{tikzpicture}[baseline=-0.65ex, thick, scale=1]
\draw (0, 2) node[circle,fill,inner sep=1.5pt]{};
\draw (1, 0) node[circle,fill,inner sep=1.5pt]{};
\draw (-1, 0) node[circle,fill,inner sep=1.5pt]{};

\draw (-1, 0) [->-] to (0, 2);

\draw (-1, 0) [->-] to (1, 0);
\draw (0, 2.3) node {$v_1$};
\draw (-1.2, -0.2) node {$v_2$};
\draw (1.3, -0.2) node {$v_3$};

\draw (-0.8, 1) node {$j$};
\draw (0, -0.3) node {$k$};

\end{tikzpicture}
\hspace{1cm}
\begin{tikzpicture}[baseline=-0.65ex, thick, scale=1]
\draw (0, 2) node[circle,fill,inner sep=1.5pt]{};
\draw (1, 0) node[circle,fill,inner sep=1.5pt]{};
\draw (-1, 0) node[circle,fill,inner sep=1.5pt]{};

\draw (1, 0) [->-] to (0, 2);
\draw (-1, 0) [->-] to (1, 0);
\draw (0, 2.3) node {$v_1$};
\draw (-1.2, -0.2) node {$v_2$};
\draw (1.3, -0.2) node {$v_3$};

\draw (0, -0.3) node {$k$};
\draw (-0.1, 1) node {$i+k$};
\end{tikzpicture}
\caption{There are $3$ oriented spanning trees rooted at $v_2$, so $N(\Gamma, w, v_2)=ij+kj+k(i+k)=(i+k)(j+k)$.}
\label{Fig: root2}
\end{figure}

\begin{figure}[h!]
\begin{tikzpicture}[baseline=-0.65ex, thick, scale=1]
\draw (0, 2) node[circle,fill,inner sep=1.5pt]{};
\draw (1, 0) node[circle,fill,inner sep=1.5pt]{};
\draw (-1, 0) node[circle,fill,inner sep=1.5pt]{};

\draw (1, 0) [->-] to (0, 2);

\draw (0,2)  [->-] to [out=150,in=135] (-1,0);
\draw (0, 2.2) node {$v_1$};
\draw (-1.2, -0.2) node {$v_2$};
\draw (1.3, -0.2) node {$v_3$};

\draw (-0.1, 1) node {$i+k$};

\draw (-1.7, 1.2) node {$j+k$};

\end{tikzpicture}
\caption{There is $1$ oriented spanning tree rooted at $v_3$, so $N(\Gamma, w, v_2)=(i+k)(j+k)$.}
\label{Fig: root3}
\end{figure}
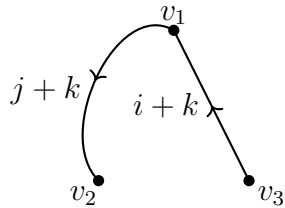

We can check that the weighted numbers of spanning trees rooted at either vertex are all equal to $(i+k)(j+k)$, as illustrated in Fig. \ref{Fig: root1}, \ref{Fig: root2} and \ref{Fig: root3}.
On the other hand, the Laplacian matrix is:

\begin{equation*}
L=\begin{pmatrix}
i+j+k & -(j+k) & -i \\
-j       & j+k    & -k \\
-(i+k) & 0     & i+k
\end{pmatrix}.
\end{equation*}

It is a straightforward calculation to see that all the matrix cofactors are also equal to $(i+k)(j+k)$.

\end{example}

\section{Spanning trees and Kauffman states}

In this section, we prove Theorem \ref{MainTheorem} for a planar MOY graph $(g,c)$, that is, there exists an MOY graph diagram $G$ of $g$ in the plane without intersections between the interior of edges. Our strategy is to express $\Delta_{(g,c)}=\Delta_{(G,c)}(1)$ using the Kauffman state sum formulation and then make an explicit bijection of Kauffman states to the oriented spanning trees in (\ref{weightednumber}) for the weighted sum.

From now on, $G$ denotes a plane MOY graph diagram in $\mathbb{R}^2$ of the graph $g$.  In \cite[Section 2]{BW}, the authors defined the Kauffman state sum for general MOY graph diagrams; we do not need the full generality here, and instead, will only focus on the simpler plane diagram case, following \cite[Section 5.3]{BW}.


Starting from the plane diagram $G$, we can obtain a {\it decorated diagram} $(G, \delta)$ by putting a base point $\delta$ on one of the edges in $G$ and drawing a circle around each vertex of $G$. Then

\begin{enumerate}
\item  $\operatorname{Cr}(G)$: denotes the set of crossings \diaCircle which are the intersection points around each vertex between the incoming edges with the circle.  Such a crossing is said to be generated by the edge. (In Example \ref{Ex1}, there are two crossings around $v_1$ generated by the edges with weights $j$ and $i+k$, respectively.) \\

\item $\operatorname{Re}(G)$: denotes the set of regions, including the {\it regular regions} of $\mathbb{R}^{2}$ separated by $G$ and the {\it circle regions} around the vertices. Note that there is exactly one circle region around each vertex. {\it Marked regions} are the regions adjacent to the base point $\delta$, and the others are called {\it unmarked regions}. (In Example \ref{Ex1}, there are $2$ marked regions and $5$ unmarked regions.)\\

\item Corners: There are $3$ corners around a crossing \diaCircle, and we call the one inside the circle region the {\it north} corner, the one on the left of the crossing the {\it west} corner and the one on the right the {\it east} corner, as illustrated below.  Note also that every corner belongs to a unique region in $\operatorname{Re}(G)$.  \\

\begin{figure}[h!]
\begin{tikzpicture}[baseline=-0.65ex, thick, scale=0.9]
\draw (0, 0.5) ellipse (1.5cm and 0.8cm);
\draw (0,-1) [->-] to (0,-0.3);
\draw (-0.4, -0.6) node {W};
\draw (0.4, -0.6) node {E};
\draw (0, 0) node {N};
\end{tikzpicture}
\end{figure}

\end{enumerate}

Calculating the Euler characteristic of $\mathbb{R}^2$ using $G$ shows
$$\vert \operatorname{Re}(G) \vert = \vert \operatorname{Cr}(G) \vert+2.$$
Also, a generic base point $\delta$ is adjacent to two regions, which will be denoted by $R_u$ and $R_{v}$. Note that since we only consider a graph equipped with a positive balanced weight, $R_u$ and $R_v$ must be distinct.

\begin{definition}
\label{states}
A {\it Kauffman state} for a decorated diagram $(G, \delta)$ is a bijective map $$s: \, \operatorname{Cr}(G)\rightarrow \operatorname{Re}(G)\backslash \{R_u, R_{v}\},$$ which sends a crossing in $\operatorname{Cr}(G)$ to one of its corners. Let $S(G, \delta)$ denote the set of all Kauffman states.
\end{definition}

\begin{definition}\label{Def:KauffmanStateSum}
Suppose $(G, \delta)$ is a decorated plane diagram with $n$ crossings $C_1, C_2, \cdots, C_n$ in $\operatorname{Cr}(G)$ and $n+2$ regions $R_1, R_2, \cdots, R_{n+2}$ in $\operatorname{Re}(G)$. We assume that the base point $\delta$ is on an edge $e_1$ with weight $i_1$. 

\begin{enumerate}
\item We define a local contribution $P_{C_p}^{\triangle}(t)$ as in Fig. \ref{fig:f2}, which is a polynomial in $t$.  
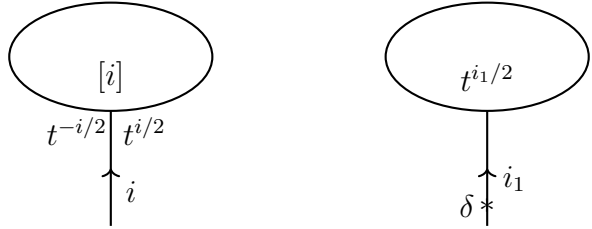
\begin{figure}[h!]
\begin{tikzpicture}[baseline=-0.65ex, thick, scale=0.9]
\draw (0, 0.5) ellipse (1.5cm and 0.8cm);
\draw (0,-2) [->-] to (0,-0.3);
\draw (-0.5, -0.6) node {$t^{-i/2}$};
\draw (0.5, -0.6) node {$t^{i/2}$};
\draw (0, 0.2) node {$[i]$};
\draw (0.3, -1.5) node {$i$};
\end{tikzpicture}
\hspace{2cm}
\begin{tikzpicture}[baseline=-0.65ex, thick, scale=0.9]
\draw (0, 0.5) ellipse (1.5cm and 0.8cm);
\draw (0,-2) [->-] to (0,-0.3);
\draw (0, 0.2) node {$t^{i_1/2}$};
\draw (0.4, -1.3) node {$i_1$};
\draw (0, -1.7) node {$*$};
\draw (-0.3, -1.7) node {$\delta$};
\end{tikzpicture}
	\caption{The local contributions $P_{C_p}^{\triangle}(t)$ for crossings generated by a generic  edge without $\delta$ (left) and the edge with $\delta$ (right), respectively.}
	\label{fig:f2}
\end{figure}

Here, $\triangle$ represents a corner around $C_p$, and 
$$[i]:= \frac{t^{i/2}-t^{-i/2}}{t^{1/2}-t^{-1/2}}=t^{\frac{i-1}{2}}+\cdots +t^{\frac{1-i}{2}}. $$

\item For each Kauffman state $s$, let $$P_s(t):= \prod_{p=1}^{n} P_{C_p}^{s(C_p)}(t).$$

\item The {\it Kauffman state sum} is defined as
\begin{equation} \label{equation: planar}
\Delta_{(G, c)}(t):=\sum_{s\in S(G, \delta)}  P_s(t).
\end{equation}

\end{enumerate}
\end{definition}


\begin{theo}[Sections 3 \& 5 of \cite{BW}]
The function $\Delta_{(G, c)}(t)$ is a topological invariant of $(G,c)$ well-defined up to $t^k$ and is independent of the choice of $\delta$.
\end{theo}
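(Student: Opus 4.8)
The plan is to prove the two assertions in turn: independence of the choice of base point $\delta$ up to an overall power of $t$, and invariance under the Reidemeister moves for MOY graph diagrams. Both are inherently local, so in each case I would localize to the portion of the diagram that is modified, put the states supported elsewhere into a canonical bijection so that they contribute identically, and show that the local factor of the state sum is preserved up to a monomial $t^k$. The two ambiguities that force the ``up to $t^k$'' normalization are precisely the base point (which fixes a distinguished monomial) and the moves that change the crossing count, so these should be tracked together rather than eliminated.

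For base-point independence, the first observation is that $\Delta_{(G,c)}(t)$ depends on $\delta$ only through the forbidden pair $\{R_u, R_v\}$ of regions adjacent to $\delta$ and through the distinguished factor $t^{i_1/2}$ attached to the crossing generated by the base-point edge $e_1$. Since the west and east corners of that crossing lie in $R_u$ and $R_v$, they are unavailable, so this crossing is forced into its north corner and always contributes $t^{i_1/2}$. Moving $\delta$ within a single edge changes nothing. To move $\delta$ across a crossing or onto a neighbouring edge, I would build an explicit bijection between $S(G,\delta)$ and $S(G,\delta')$ and compare the products $P_s(t)$ term by term, tracking the monomial discrepancy coming from the change of the forbidden pair and the relabelling of the distinguished crossing; the balanced-weight condition (\ref{balancedcoloring}) is what guarantees this discrepancy is a single state-independent monomial $t^k$.

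For topological invariance I would verify the statement on each elementary move: the classical Reidemeister moves R1, R2, R3 away from vertices, together with the graph-specific moves that slide a strand past a vertex and move a vertex through a crossing. For each move I would restrict to the disk in which the diagrams differ, enumerate the finitely many local corner configurations available there, and check that summing the local contributions $P_{C_p}^{\triangle}(t)$ over all local completions yields the same polynomial before and after, up to $t^k$. The identity $[i] = t^{(i-1)/2}+\cdots+t^{(1-i)/2}$ and the balanced condition at each vertex are exactly what force the local sums to match across the vertex moves. The step I expect to be the main obstacle is the combined R3 and vertex-crossing analysis, where three or more crossings interact and one global state restricts to several distinct local patterns that must be paired off; the delicate bookkeeping is tracking which corners fall into the forbidden regions $R_u, R_v$ and confirming that no spurious terms survive.

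If the direct check became unwieldy, the cleaner route I would pursue is to identify $\Delta_{(G,c)}(t)$, up to a unit, with the order of the Alexander module of the maximal abelian cover of the graph complement, following the classical identification of Kauffman's state sum with a maximal minor of the Alexander matrix. Concretely, each Kauffman state would correspond to a nonvanishing term in the permutation expansion of that minor, with deletion of the two rows and columns for $R_u$ and $R_v$ realizing the correct minor, and one checks that the monomials and signs agree. Because the Alexander module is defined intrinsically from the complement, both the topological invariance and the independence of $\delta$ follow simultaneously, leaving only the combinatorial matching of states to determinant terms as the remaining work.
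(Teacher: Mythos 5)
You should first note that the paper you are reading does not prove this statement at all: it is imported verbatim from the authors' earlier work, as the attribution ``Sections 3 \& 5 of \cite{BW}'' indicates, so there is no in-paper proof to compare against. Moreover, the present paper only records the state sum for \emph{plane} diagrams (Definition \ref{Def:KauffmanStateSum}); the general definition with crossing contributions, which is what the theorem is actually about, lives in \cite{BW}. Your outline is aimed at the right target and follows essentially the strategy of \cite{BW} and \cite{bao}: check invariance locally under the Reidemeister moves for MOY diagrams (including the vertex--strand moves) and under relocation of $\delta$, or alternatively identify the state sum with a maximal minor of an Alexander matrix coming from the abelian cover of the complement, so that both invariances follow from the intrinsic topological definition. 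Your second route is in fact closer to how the base-point independence is handled in practice, since deleting a different pair of adjacent rows/columns of a matrix with the balanced row/column-sum property changes the minor only by a unit.

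That said, as written this is a programme rather than a proof: every substantive step is deferred with ``I would verify'' or ``I would build an explicit bijection.'' The entire content of the theorem is in those local verifications --- the contribution tables for positive and negative crossings (which you never specify), the identity forcing the local sums to agree across an R2 or R3 move, the vertex moves where the quantum-integer factors $[i]$ must recombine, and the check that the state-to-permutation-term correspondence preserves signs and monomials. None of these is carried out, and at least one claim is asserted without justification, namely that the discrepancy produced by moving $\delta$ past a crossing or vertex is a single state-independent monomial $t^k$; that is exactly the point that requires either a clock-theorem-style transposition argument or the determinant identification. So the skeleton is sound and matches the cited proof, but no part of the theorem is actually established by the proposal itself.
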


Now, we are ready to prove Theorem \ref{MainTheorem} for the planar graph case.  The key observation is the remarkable similarity in the formula of the weighted number of spanning trees in Definition \ref{Def:weightednumber} and the formula of the Kauffman state sum in Definition \ref{Def:KauffmanStateSum}.  Note that when one substitutes $t=1$ in Equation (\ref{equation: planar}), the value $\Delta_{(G, c)}(1)$ is expressed as a sum of the value $P_s(1)$ over all Kauffman states $s$, where each $P_s(1)$ is a product of local contributions $P_{C_p}^{\triangle}(1)$ as in Fig. \ref{fig:f3}.  Our goal is to describe an explicit bijection between the set of rooted spanning trees $\cal{T}_r(G)$ with the set of Kauffman states $S(G, \delta)$, and then identify the weights $c(e)$ and $c(T)$ with the local contributions $P_{C_p}^{\triangle}(1)$ and $P_s(1)$, respectively.

\begin{figure}[h!]
\begin{tikzpicture}[baseline=-0.65ex, thick, scale=0.9]
\draw (0, 0.5) ellipse (1.5cm and 0.8cm);
\draw (0,-2) [->-] to (0,-0.3);
\draw (-0.5, -0.6) node {$1$};
\draw (0.5, -0.6) node {$1$};
\draw (0, 0.2) node {$i$};
\draw (0.3, -1.5) node {$i$};
\end{tikzpicture}
\caption{The local contributions $P_{C_p}^{\triangle}(1)$ for a crossing generated by a generic edge without $\delta$; the contribution is $1$ for the edge with $\delta$, so we can ignore the term in the computation of $P_s(1)$.}
	\label{fig:f3}
\end{figure}
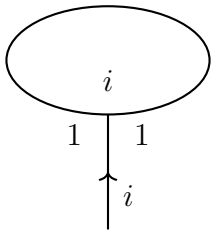

\begin{theo}\label{Correspondence}
Suppose $(G,c)$ is a plane MOY graph diagram where the base point $\delta$ is on an edge that enters the vertex $r$.  Then, there is a canonical bijective map
$$\phi: \cal{T}_r(G)   \longrightarrow   S(G, \delta) $$
so that each oriented spanning tree $T$ rooted at $r$ has a one-to-one correspondence with a certain Kauffman state $s\in S(G, \delta)$.  Moreover, the weight of each spanning tree $c(T)$ is equal to the corresponding term $P_s(1)$.   Consequently, $\Delta_{(G,c)}(1)=N(G,c).$

\end{theo}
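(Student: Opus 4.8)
The plan is to prove the three assertions of Theorem~\ref{Correspondence} in sequence: first specialize the Kauffman state sum to $t=1$ and record the local contributions, then construct the bijection $\phi$ together with its inverse, and finally read off the weight identity. I would begin by evaluating the local contributions at $t=1$. Since $[i]=t^{(i-1)/2}+\cdots+t^{(1-i)/2}$ is a sum of $i$ monomials, one has $[i]\big|_{t=1}=i=c(e)$ for the edge $e$ generating the crossing, while the west and east contributions $t^{\pm i/2}$ both specialize to $1$. Hence for any state $s$,
\[
P_s(1)=\prod_{C_e\,:\,s(C_e)=\mathrm N} c(e),
\]
the product ranging over those crossings sent to their north corner. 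I would also note that the base point $\delta$ sits on the edge $e_1$ right at the crossing $C_{e_1}$, so its west and east corners are precisely the two deleted regions $R_u,R_v$; consequently $s(C_{e_1})$ is forced to be the north corner (the circle region of $r$), and it contributes the factor $1$.

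Next I would set up the correspondence. The essential structural fact is that a circle region has corners only of north type, so in any Kauffman state the circle region of a vertex $v$ is hit by the north corner of a \emph{unique} incoming edge of $v$. Thus every state selects exactly one incoming edge $\iota(v)$ at each vertex, with $\iota(r)=e_1$ forced. In the reverse direction, given an oriented spanning tree $T$ rooted at $r$ I would send the crossing of each tree edge to its north corner (the circle region of its head), send $C_{e_1}$ to the circle region of $r$, and assign the remaining cotree crossings to regular regions through their west/east corners. Because each non-root vertex of $T$ has in-degree $1$, the tree edges biject with the non-root vertices via their heads, so together with $e_1$ they fill all $|V|$ circle regions exactly once; the weight identity $P_{\phi(T)}(1)=\prod_{e\in E(T)}c(e)=c(T)$ is then immediate from the first step.

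The heart of the argument, and the step I expect to be the main obstacle, is to show that the cotree crossings admit a \emph{unique} bijective assignment to the regular regions $\operatorname{Re}(G)\setminus\{R_u,R_v\}$ through their west/east corners, and dually that the selection $\iota$ arising from an arbitrary state is automatically acyclic, hence an oriented spanning tree. Both facts I would derive from planar tree--cotree duality. The count $|\operatorname{Re}(G)|=|\operatorname{Cr}(G)|+2$ shows that the number of cotree crossings other than $e_1$ equals $|E|-|V|$, the number of regular regions other than $R_u,R_v$; moreover the two faces bordering an edge $e$ are exactly the regions containing the west and east corners of $C_e$, so choosing west versus east amounts to choosing which of these two faces is claimed. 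Passing to the planar dual, the cotree of a spanning tree is a spanning tree of the dual graph; rooting this dual tree at $R_u$, so that the dual edge $e_1^{*}$ claims $R_v$, produces an arborescence in which every remaining face is claimed by exactly one cotree edge, which is precisely the desired west/east assignment. Conversely, were $\iota$ to contain a directed cycle, the complementary edges would fail to span the dual and obstruct the matching; this is the matroid-duality statement that a subset is a forest in $G$ if and only if its complement spans $G^{*}$. The delicate points will be bookkeeping the circle regions inside this duality and verifying that the two deleted regions $R_u,R_v$ jointly play the role of the dual root together with the flanking faces of $e_1$.

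Finally, granting the bijection $\phi$ and the weight identity, summing over all states and all trees yields
\[
\Delta_{(G,c)}(1)=\sum_{s\in S(G,\delta)}P_s(1)=\sum_{T\in\mathcal{T}_r(G)}c(T)=N(G,c),
\]
which is the assertion of the theorem; independence of the choice of root has already been established in Proposition~\ref{BalancedWeightCount}.
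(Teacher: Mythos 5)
Your proposal is correct and follows essentially the same route as the paper: north corners record the chosen incoming (tree) edge at each vertex, the west/east assignment of the cotree crossings is produced and shown unique via the dual spanning tree in $G^*$, and the weight identity follows from $[i]\big|_{t=1}=i$. The only cosmetic difference is in the acyclicity step for the inverse map, where you invoke tree--cotree/matroid duality while the paper runs an explicit Euler-characteristic count of regions versus crossings inside the disk bounded by a hypothetical cycle; these are two phrasings of the same planarity argument.
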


\begin{proof}

Recall that for each spanning tree $T$ of the plane graph $G$, there is a canonical dual spanning tree $T^*$ in the dual graph $G^*$ consisting of all edges which are duals of the edges not in $T$.  We then construct the Kauffman state $s$ in the following way:

\begin{enumerate}
\item For the edge $e_1$ where the base point  $\delta$ is on, assign the crossing \diaCircle generated by $e_1$ to its north corner inside the circle region around the vertex $r$.
\item For each edge $e$ in the oriented spanning tree $T$ that enters the vertex $v$, assign the crossing \diaCircle generated by $e$ to its north corner inside the circle region around the vertex $v$.
\item For all other crossings, there is a unique way of assigning one of the east and west corners: Starting from the vertices in $G^*$ dual to the two regions $R_u$ and $R_v$, one can travel to all other vertices in $G^*$ along edges of $T^*$.  In each step that we traverse $e$ on the dual edge $e^*$ from $v_1^*$ and $v_2^*$, assign the crossing \diaCircle generated by $e$ to the corner that belongs to the regular region dual to $v_2^*$.

\end{enumerate}

The above construction may be easier to understand if one looks instead at the more concrete pictures in Example \ref{Ex1} below.  Since every vertex $v\neq r$ of an oriented spanning tree $T$ has in-degree 1, and vertices in $T^*$ have a one-to-one correspondence with regular regions of $\mathbb{R}^2$ separated by $G$,  one can see that $s$ thus defined gives a bijective map between $\operatorname{Cr}(G)$ and $\operatorname{Re}(G)\backslash \{R_u, R_{v}\}$; so it is a Kauffman state by Definition \ref{states}.  Therefore, the map $\phi: \cal{T}_r(G)   \longrightarrow   S(G, \delta) $ is well-defined.

To show that $\phi$ is bijective, we construct an inverse map $\psi: S(G, \delta) \longrightarrow \cal{T}_r(G)$.  Given a Kauffman state $s$, let $F\subset E$ be the set of edges so that $s$ assigns the crossing \diaCircle generated by those edges to their north corners. By definition $e_1 \in F$, recalling that $e_1$ is the edge with base point $\delta$. Then $E-F$ is the set of edges so that $s$ assigns the crossing \diaCircle generated by those edges to their east or west corners.  Let $T$ be the subgraph of $G$ generated by $F-\{e_1\}$, and let $T^*$ be the subgraph of $G^*$ generated by $(E-F)\cup \{e_1\}$. We want to show that $T$ is an oriented spanning tree rooted at $r$.  To this end, note that the size of $T$ is by definition $|F|-1=|V|-1$ since $s$ is a Kauffman state.  It is also clear that every vertex $v\neq r$ has in-degree $1$ and the root $r$ has in-degree $0$.  Thus, it suffices to show that $T$ does not have a cycle.

We prove by contradiction.  Suppose $C$ is a cycle in $T$.  Then $C$ bounds a disk $D$ in $\mathbb{R}^2$.  Without loss of generality, we assume that $D\cap  \mathrm{Int}(e_1) =\emptyset$, and therefore the marked regions $R_u, R_v$ are not contained in $D$.  Consider the subgraph $G'=G\cap D$.  Let $a$ be the number of vertices of $G'$, and let $b$ be the number of edges of $G'$.  By Euler's formula, the number of regular regions of $G$ inside $D$  is $b-a+1$.  
Together with the additional $a$ circle regions intersecting $D$, the total number is $$ \#( \text{regions})= (b-a+1)+a=b+1.$$
Meanwhile, the total number of crossings in $D$ is  $$ \#( \text{crossings}  \diaCircle )= b.$$
Since $\partial D=C\subset T$, the Kauffman state $s$ assigns the crossing \diaCircle generated by edges of the cycle $C$ to their north corners (circle regions intersecting the boundary of $D$).  It follows that $s$ must map $b$ crossings in $D$ onto $b+1$ regions in $D$, which is impossible.

Thus, we proved $T$ is a spanning tree, and we define $\psi(s)=T$. Clearly, $\psi$ is the inverse of $\phi$.  It is straightforward to see that the weight of each spanning tree $c(T)$ is equal to the corresponding term $P_s(1)$.  This proves the theorem.
\end{proof}

\begin{example}\label{Ex1}

The graph in Example \ref{ExampleSpanningTree} is in fact an MOY graph diagram, so we can compute its Alexander polynomial.   With the base point $\delta$ on the edge of weight $k$, we obtain a decorated diagram and find exactly one Kauffman state $s$, as indicated by $\bullet$ in Fig.  \ref{fig:example} (left).  The associated spanning tree $T$ rooted at $v_3$ and its dual spanning tree $T^*$ specified by Theorem \ref{Correspondence} are marked in thick red in Fig.  \ref{fig:example} (right).

\begin{figure}[h!]
\begin{tikzpicture}[baseline=-0.65ex, thick, scale=1.5]
\draw (0, 1.5) node[circle,fill,inner sep=1pt]{};
\draw (1, -0.5) node[circle,fill,inner sep=1pt]{};
\draw (-1, -0.5) node[circle,fill,inner sep=1pt]{};

\draw (1, -0.5) [->-] to (0, 1.5);
\draw (-1, -0.5) [->-] to (0, 1.5);
\draw (-1, -0.5) [->-] to (1, -0.5);

\draw (0,1.5)  [->-] to [out=30,in=45] (1,-0.5);
\draw (0,1.5)  [->-] to [out=150,in=135] (-1,-0.5);
\draw (0, 1.7) node {$v_1$};
\draw (-1.2, -0.6) node {$v_2$};
\draw (1.2, -0.6) node {$v_3$};

\draw (0, -0.8) node {$k$};
\draw (0.1, 0.5) node {$i+k$};
\draw (-0.7, 0.5) node {$j$};
\draw (1.3, 0.7) node {$i$};
\draw (-1.5, 0.7) node {$j+k$};

\draw (0, 1.5) circle (0.4);
\draw (1, -0.5) circle (0.4);
\draw (-1, -0.5) circle (0.4);

\draw(0.2, -0.5) node{$*$};
\draw(0.3, -0.3) node{$\delta$};

\draw(0.8, -0.5) node {$\bullet$}; 
\draw(0.1, 1.3) node {$\bullet$};
\draw(-1.15, -0.3) node {$\bullet$};
\draw(-0.3, 1.1) node {$\bullet$};
\draw(1.1, 0) node {$\bullet$};
\end{tikzpicture} \quad \quad \quad
\begin{tikzpicture}[baseline=-0.65ex, thick, scale=1.4]
\draw (0, 2) node[circle,fill,inner sep=1.5pt]{};
\draw (1, 0) node[circle,fill,inner sep=1.5pt]{};
\draw (-1, 0) node[circle,fill,inner sep=1.5pt]{};

\draw (1, 0) [->-, color=red, line width=0.7mm] to (0, 2);
\draw (-1, 0) [->-] to (0, 2);
\draw (-1, 0) [->-] to (1, 0);

\draw (0,2)  [->-] to [out=30,in=45] (1,0);
\draw (0,2)  [->-, color=red, line width=0.7mm] to [out=150,in=135] (-1,0);
\draw (0, 2.2) node {$v_1$};
\draw (-1.2, -0.1) node {$v_2$};
\draw (1.2, -0.1) node {$v_3$};

\draw (-0.8, 2) [color=red, line width=0.7mm] node {$T$};

\draw (0, 1) node{$\circ$};
\draw (0, -1) node{$\circ$};
\draw (0.7, 1.35) node{$\circ$};
\draw (-0.7, 1.35) node{$\circ$};

\draw [dashed, color=red, line width=0.7mm](0.7,1.35) to [out=0,in=90] (1.8,0) to [out=270, in=0] (0,-1);

\draw [dashed](-0.7,1.35) to [out=180,in=90] (-1.8,0) to [out=270, in=180] (0,-1);

\draw[color=red, dashed, line width=0.7mm] (0, 1) --  (0, -1);
\draw[color=red, dashed, line width=0.7mm] (0, 1) --  (-0.7, 1.35);
\draw[dashed] (0, 1) --  (0.7, 1.35);

\draw (0.3, -0.5) [color=red] node {$T^*$};
\end{tikzpicture}

\caption{The Kauffman state $s\in S(G, \delta)$ in the decorated diagram (left) and the corresponding oriented spanning tree $T$ and its dual spanning tree $T^*$ in thick red (right).}

\label{fig:example}
\end{figure}
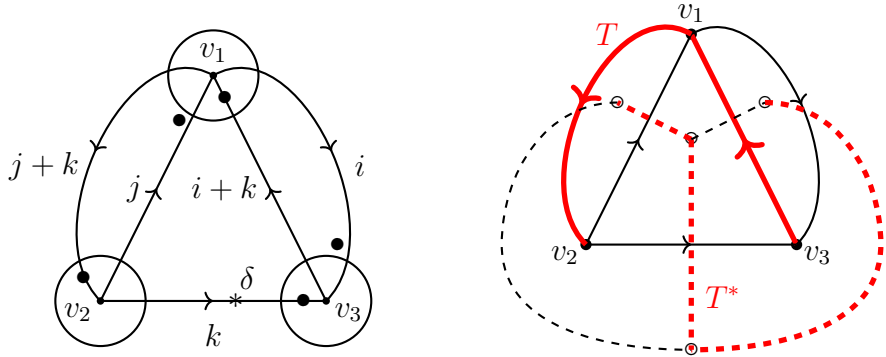

According to Definition \ref{Def:KauffmanStateSum}, $$\Delta_{(G,c)}(t)=P_s(t)= t^{k/2}\cdot t^{-j/2}\cdot t^{i/2}\cdot [i+k]\cdot [j+k],   $$
as $s$ is the unique Kauffman state.   In particular, we see that $$P_s(1)=(i+k)(j+k)=c(T).$$ 

\end{example}


\section{Spanning trees and skein relations}

To establish Theorem \ref{MainTheorem} for arbitrary graphs, our strategy is to prove a set of skein relations and reduce the general case to the plane graph case, which was  just confirmed in the previous section.

Let $(g, c)$ be an abstract MOY graph, and let $G$ be an MOY graph diagram of $g$ on $\mathbb{R}^2$.  In general, $G$ may have double points corresponding to crossings of type \diaCrossP (positive crossing) and \diaCrossN (negative crossing).  Since neither of the invariant $\Delta_{(g, c)}$ or $N(g, c)$ depends on the types of crossings, hereafter, we simply use \diaCross to represent a double point (for either positive or negative crossing) in $G$. We begin with a lemma.

\begin{lemma}\label{insertvertex}
Let $(g, c)$ be an abstract MOY graph. We obtain a new graph $(g', c')$ by inserting a vertex $v'$ of degree 2 into an edge $e$ of $g$. Then we have $$N(g', c')=c(e)N(g, c),$$ where $c'$ denotes the induced balanced weight on $g'$ from $c$.
\end{lemma}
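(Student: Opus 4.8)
The plan is to exhibit an explicit weight-respecting bijection between the oriented spanning trees of $g'$ and those of $g$, from which the factor $c(e)$ emerges transparently. Write $a$ and $b$ for the tail and head of the directed edge $e$, so that inserting $v'$ replaces $e$ with two edges $e_1$ (from $a$ to $v'$) and $e_2$ (from $v'$ to $b$). The balanced condition at the degree-$2$ vertex $v'$ forces $c'(e_1)=c'(e_2)$, and compatibility with the weights at $a$ and $b$ gives $c'(e_1)=c'(e_2)=c(e)$, while every other edge keeps its weight. By Proposition \ref{BalancedWeightCount} we may compute $N(g',c')$ using any root, so I would first fix a root $r$ that is a vertex of $g$, i.e.\ $r\neq v'$.

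The main structural observation to record is that in any oriented spanning tree $T'$ of $g'$ rooted at $r$, the vertex $v'$ has in-degree $1$, and since $e_1$ is its \emph{only} incoming edge, necessarily $e_1\in T'$. This partitions the spanning trees of $g'$ according to whether $e_2\in T'$ or not. I would then define the map $T'\mapsto T$ by: if $e_2\in T'$, set $T=(T'\setminus\{e_1,e_2\})\cup\{e\}$, contracting the subdivided path $a\to v'\to b$ back to the single edge $e$; if $e_2\notin T'$, then $v'$ is a leaf, and I set $T=T'\setminus\{e_1\}$, deleting the pendant vertex $v'$. The candidate inverse is equally explicit: a spanning tree $T$ of $g$ with $e\in T$ is sent to $(T\setminus\{e\})\cup\{e_1,e_2\}$, and one with $e\notin T$ is sent to $T\cup\{e_1\}$. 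The step I expect to be the only real obstacle is verifying that these assignments genuinely land among oriented spanning trees in both directions, namely that the resulting digraphs have the correct in-degrees (every vertex $\neq r$ of in-degree $1$ and the root of in-degree $0$) and, crucially, remain acyclic. Acyclicity should follow from the fact that subdividing an edge of an acyclic digraph, or attaching or deleting a leaf, preserves acyclicity, but I would argue this carefully rather than merely assert it.

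Once the bijection is established, the weight comparison is immediate from Definition \ref{Def:weightednumber}, since the weight of a tree is the product of its edge weights. When $e_2\in T'$, the tree $T'$ carries both $e_1$ and $e_2$ of weight $c(e)$ in place of the single edge $e$ of weight $c(e)$ in $T$, so $w(T')=c(e)\,w(T)$; when $e_2\notin T'$, the tree $T'$ carries the extra leaf edge $e_1$ of weight $c(e)$ compared with $T$, so again $w(T')=c(e)\,w(T)$. Summing over all spanning trees yields $N(g',c')=c(e)\,N(g,c)$, as claimed. I would note in passing that one could alternatively deduce the identity from the Matrix Tree Theorem \ref{MatrixTreeTheorem} by relating the Laplacian cofactor of $g'$ to that of $g$, but the bijective argument is cleaner and makes the geometric meaning of the factor $c(e)$ manifest.
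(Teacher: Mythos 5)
Your proposal is correct and follows essentially the same argument as the paper: the explicit weight-respecting bijection $T\mapsto(T\setminus\{e\})\cup\{e_1,e_2\}$ when $e\in T$ and $T\mapsto T\cup\{e_1\}$ when $e\notin T$, with the factor $c(e)$ appearing in each case, is exactly the paper's proof. The extra care you propose for verifying acyclicity and the remark about the Matrix Tree Theorem are fine but do not change the substance.
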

\begin{proof}
Suppose $e$ in $g$ is separated into two edges $e_1$ and $e_2$ in $g'$, and $e_1$ is the edge pointing to $v'$.
\begin{align*}
\begin{tikzpicture}
\draw (-1, 0) [->-] to (1, 0);
\draw (1,0.5) node {$v'$};
\draw (1, 0) [->-] to (3, 0);
\draw (1,0) node[circle,fill,inner sep=1pt]{};
\draw (0,-0.5) node {$e_1$};
\draw (2,-0.5) node {$e_2$};
\end{tikzpicture}
\end{align*}
For any root vertex $r$ in $g$, consider $\mathcal {T}_r(g)$ and $\mathcal {T}_{r}(g')$, the set of all oriented spanning trees of $g$ and $g'$ rooted at $v$, respectively. There is a canonical one-to-one correspondence between $\mathcal {T}_r(g)$ and $\mathcal {T}_{r}(g')$: if $T\in \mathcal {T}_r(g)$ contains $e$, let $T'=(T-\{e\})\cup \{e_1, e_2\}$; if $T \in \mathcal {T}_r(g)$ does not contain $e$, let  $T'=T\cup \{e_1\}$. In either case, we have $c'(T')=c(e)c(T)$.  Taking the sum over all trees gives the lemma.
\end{proof}

\begin{prop}\label{skeinrelation}
We have the following skein relations for the weighted number of spanning trees, where $N(G)$ represents $N(g, c)$ if $G$ is a graph diagram with underlying graph $(g, c)$.
In each equality, the graph diagrams are identical outside the local diagrams shown there. When $i=j$, ignore the edge with weight $j-i$.
\begin{align*}
&\text{When $i\leq j$:}\\
&N\left(\begin{tikzpicture}[baseline=-0.65ex, thick, scale=0.6]
\draw (-1, -1) [->] -- (1, 1) node[above]{$i$};
\draw (1,-1) [->] to (-1,1)  node[above]{$j$};
\end{tikzpicture}\right)
  =\frac{-1}{i\cdot j}\cdot
N\left(\begin{tikzpicture}[baseline=-0.65ex, thick, scale=1.2]
\draw (0,-1) [->-] to (0, 0.33);
\draw (0, 0.33) [->] to (0,1);
\draw (1,-1) [->-] to (1, -0.33);
\draw (1, -0.33) [->] to (1,1);
\draw (0,0.33) [-<-]  to [out=270,in=180] (0.5,0) to [out=0,in=90] (1,-0.33);
\draw (0,1.25) node {$j$};
\draw (0,-1.25) node {$i$};
\draw (1,1.25) node {$i$};
\draw (0.9,-1.25) node {$j$};
\draw (0.5,0.3) node {$j-i$};
\draw (1, -0.33) node[circle,fill,inner sep=1pt]{};
\draw (0, 0.33) node[circle,fill,inner sep=1pt]{};
\end{tikzpicture}\right)
+ \, \frac{1}{i\cdot (i+j)}\cdot
N\left(
\begin{tikzpicture}[baseline=-0.65ex, thick, scale=1.2]
\draw (0,-1) [->-] to [out=90,in=270] (0.5,-0.33);
\draw (0.5, -0.33) [->-] to [out=90,in=270] (0.5,0.33);
\draw (0.5, 0.33) [->] to [out=90,in=270] (0,1);
\draw (1,-1) [->-] to [out=90,in=270] (0.5,-0.33);
\draw (1,1) [<-] to [out=270,in=90] (0.5,0.33);
\draw (0, -1.25) node {$i$};
\draw (0.9,-1.25) node {$j$};
\draw (0,1.25) node {$j$};
\draw (1,1.25) node {$i$};
\draw (1,0) node {$i+j$};
\draw (0.5, -0.33) node[circle,fill,inner sep=1pt]{};
\draw (0.5, 0.33) node[circle,fill,inner sep=1pt]{};
\end{tikzpicture}\right).\\
 &\text{When $j< i$:}\\
&\left(\begin{tikzpicture}[baseline=-0.65ex, thick, scale=0.6]
\draw (-1, -1) [->] -- (1, 1) node[above]{$i$};
\draw  (1,-1) [->] to (-1,1)  node[above]{$j$};
\end{tikzpicture}\right)
  =\frac{-1}{i\cdot j}\cdot
\left(\begin{tikzpicture}[baseline=-0.65ex, thick, scale=1.2]
\draw (0,-1) [->-] to (0, -0.33);
\draw (0, -0.33) [->] to (0,1);
\draw (1,-1) [->-] to (1, 0.33);
\draw (1, 0.33) [->] to (1,1);
\draw (0,-0.33) [->-]  to [out=90,in=180] (0.5,0) to [out=0,in=270] (1,0.33);
\draw (0,1.25) node {$j$};
\draw (0,-1.25) node {$i$};
\draw (1,1.25) node {$i$};
\draw (0.9,-1.25) node {$j$};
\draw (0.5,0.3) node {$i-j$};
\draw (1, 0.33) node[circle,fill,inner sep=1pt]{};
\draw (0, -0.33) node[circle,fill,inner sep=1pt]{};
\end{tikzpicture}\right)
+ \, \frac{1}{j\cdot (i+j)}\cdot
\left(
\begin{tikzpicture}[baseline=-0.65ex, thick, scale=1.2]
\draw (0,-1) [->-] to [out=90,in=270] (0.5,-0.33);
\draw (0.5, -0.33) [->-] to [out=90,in=270] (0.5,0.33);
\draw (0.5, 0.33) [->] to [out=90,in=270] (0,1);
\draw (1,-1) [->-] to [out=90,in=270] (0.5,-0.33);
\draw (1,1) [<-] to [out=270,in=90] (0.5,0.33);
\draw (0, -1.25) node {$i$};
\draw (0.9,-1.25) node {$j$};
\draw (0,1.25) node {$j$};
\draw (1,1.25) node {$i$};
\draw (1,0) node {$i+j$};
\draw (0.5, -0.33) node[circle,fill,inner sep=1pt]{};
\draw (0.5, 0.33) node[circle,fill,inner sep=1pt]{};
\end{tikzpicture}\right).\\
\end{align*}
\end{prop}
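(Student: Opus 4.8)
The plan is to prove both skein relations through the Matrix Tree Theorem (Theorem \ref{MatrixTreeTheorem}), reducing each identity to a short linear-algebra computation on the Laplacians of the three diagrams involved. Since all three graphs carry balanced weights, Proposition \ref{BalancedWeightCount} lets me compute each $N(G_\bullet)$ as $\det(L_r)$ for a single common root $r$, chosen to be a vertex lying in the part of the diagram shared by all three terms. The diagrams agree outside the local picture, so their Laplacians agree except in the rows and columns indexed by the four external legs of the tangle—call them $A,B,C,D$, where $A,B$ are the tails of the strands of weight $i,j$ and $C,D$ are their heads—and by the internal trivalent vertices created by each resolution. Because a leg contributes to column $k$ of the Laplacian only through edges pointing \emph{into} $v_k$, the gadget edges feed only into the columns indexed by the heads $C$ and $D$; this observation is what makes the whole comparison one-dimensional in spirit.

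First I would eliminate the internal vertices of the two resolutions by taking a Schur complement of $L_r$. A direct check of in-degrees shows that each interior vertex of the first resolution has weighted in-degree $j$ and each interior vertex of the second has weighted in-degree $i+j$, and that the two corresponding $2\times 2$ interior blocks are triangular with determinants $j^2$ and $(i+j)^2$. Schur complementing then gives $N(G_1)=j^2\det(M^{(1)})$ and $N(G_2)=(i+j)^2\det(M^{(2)})$, where $M^{(1)},M^{(2)}$ are reduced matrices on the legs together with the common exterior, while $N(G_0)=\det(M^{(0)})$ directly. Crucially, each $M^{(\alpha)}$ equals one common exterior matrix $L^{\mathrm{ext}}$ plus a local correction $\lambda^{(\alpha)}$ supported entirely in the two columns indexed by $C$ and $D$. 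Folding the stated prefactors into the Schur factors, the first relation becomes the determinant identity $\det(L^{\mathrm{ext}}+\lambda^{(0)})_r=a\,\det(L^{\mathrm{ext}}+\lambda^{(1)})_r+b\,\det(L^{\mathrm{ext}}+\lambda^{(2)})_r$ with $a=-j/i$ and $b=(i+j)/i$, whence $a+b=1$.

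I then finish by multilinearity of the determinant in its two varying columns $C$ and $D$. Writing each such column as its exterior part plus its $\lambda$-part and expanding, the identity splits into four pieces. The three pieces in which at most one column is a $\lambda$-part match at once, using $a+b=1$ together with the two vector identities $\lambda^{(0)}_C=a\lambda^{(1)}_C+b\lambda^{(2)}_C$ and $\lambda^{(0)}_D=a\lambda^{(1)}_D+b\lambda^{(2)}_D$, which are one-line checks from the explicit four-entry vectors. The remaining piece, in which both columns are $\lambda$-parts, reduces—after substituting $a-1=-b$ and $b-1=-a$—to the single requirement $\det\!\left(\lambda^{(1)}_C-\lambda^{(2)}_C,\ \lambda^{(2)}_D-\lambda^{(1)}_D\right)=0$. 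This holds because both difference vectors equal $\tfrac{i^2}{i+j}(e_B-e_A)$, so the matrix in question has two identical columns. The case $j<i$ follows from the same computation with $i$ and $j$ interchanged (interior in-degrees $i$ and $i+j$, coefficients $-1/(ij)$ and $1/(j(i+j))$), and when $i=j$ the weight-$(j-i)$ edge is absent, which the computation absorbs with no change.

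I expect the crux to be exactly that last ``both-columns'' term. The three easy pieces only pin down $a$ and $b$ through the relation $a+b=1$ and the two linear identities; they do not by themselves force the precise rational coefficients. It is the vanishing of $\det\!\left(\lambda^{(1)}_C-\lambda^{(2)}_C,\ \lambda^{(2)}_D-\lambda^{(1)}_D\right)$—equivalently, the coincidence that these two difference vectors are equal and supported only on the legs $A,B$—that cancels the quadratic cross terms and selects the stated coefficients. The surrounding bookkeeping, namely verifying that the Schur complement produces exactly the factors $j^2$ and $(i+j)^2$ and that interior elimination feeds back only into the columns of $C$ and $D$, is routine, but it is precisely the point at which the combinatorial structure of the two resolutions is used.
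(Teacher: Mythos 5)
Your proposal is correct, and I verified the key numerical claims: with the vertices of the two resolutions ordered along the internal edge the interior blocks are indeed upper triangular with determinants $j^2$ and $(i+j)^2$, the folded coefficients $a=-j/i$, $b=(i+j)/i$ satisfy $a+b=1$, the column identities $\lambda^{(0)}_C=a\lambda^{(1)}_C+b\lambda^{(2)}_C$ and $\lambda^{(0)}_D=a\lambda^{(1)}_D+b\lambda^{(2)}_D$ hold (e.g.\ $\lambda^{(1)}_C=je_C-ie_A-(j-i)e_B$, $\lambda^{(2)}_C=je_C-\tfrac{ij}{i+j}e_A-\tfrac{j^2}{i+j}e_B$), and both difference vectors equal $\tfrac{i^2}{i+j}(e_B-e_A)$, killing the quadratic cross term. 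However, this is a genuinely different route from the paper's. The paper does not touch the Laplacian at this stage: it first uses Lemma \ref{insertvertex} to insert degree-$2$ vertices $a,b,c,d$ on the four legs, chooses the head $b$ of the outgoing $i$-leg as the common root (legitimate by Proposition \ref{BalancedWeightCount}, just as in your argument), and then classifies the spanning trees of each local resolution by which forced edges they contain. Trees of $G$ biject with trees of type (A) in $G_1$ and of type $(\alpha)$ in $G_2$, with weights satisfying $c(T)=\tfrac{-1}{ij}c(T_1)+\tfrac{1}{i(i+j)}c(T_2)$, while the remaining trees of $G_1$ (type (B)) and of $G_2$ (type $(\beta)$) pair off and cancel; summing gives the relation. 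The paper's bijective argument makes the origin of the coefficients transparent (they are ratios of the forced-edge weights) and stays entirely combinatorial, whereas your Schur-complement computation is more mechanical and localizes the entire verification to a handful of four-entry vectors, at the cost of some bookkeeping you should still nail down explicitly: the choice of root when the exterior is small, possible coincidences among the leg endpoints $A,B,C,D$ (if the two heads coincide there is only one varying column and the cross term disappears, which only helps), and the invertibility of the interior blocks (automatic for positive weights). None of these affects the validity of the approach.
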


\begin{proof}
We prove the first relation only and the second one can be proved analogously. Denote $G, G_1, G_2$ the diagram on the left hand side and the two diagram on the right hand side of the equality, respectively.

We assume without loss of generality that there are four vertices $a, b, c, d$ of degree $2$ in each of the diagram as shown below. If not, we will just insert the missing ones: Lemma \ref{insertvertex} ensures that the invariants $N(G)$, $N(G_1)$ and $N(G_2)$ will change by a same factor.

\begin{align*}
\begin{tikzpicture}[baseline=-0.65ex, thick, scale=1]
\draw (-1, -1) [->] -- (1, 1) node[above]{$i$};
\draw (1,-1) [->] to (-1,1)  node[above]{$j$};
\draw (0.6,0.6) node[circle,fill,inner sep=1pt]{};
\draw (-0.6,0.6) node[circle,fill,inner sep=1pt]{};
\draw (0.6,-0.6) node[circle,fill,inner sep=1pt]{};
\draw (-0.6,-0.6) node[circle,fill,inner sep=1pt]{};
\draw (0.9,0.6)  node{$b$};
\draw (-0.9,0.6)  node{$a$};
\draw (-0.9,-0.6)  node{$c$};
\draw (0.9,-0.6)  node{$d$};
\draw (0,-2.2)  node{$G$};
\end{tikzpicture}\quad\quad\quad
\begin{tikzpicture}[baseline=-0.65ex, thick, scale=1.4]
\draw (0,-0.7) [->-] to (0, 0.33);
\draw (0,-1) -- (0, -0.7);
\draw (0, 0.33) [->] to (0,1);
\draw (1,-0.7) [->-] to (1, -0.33);
\draw (1,-1) -- (1, -0.7);
\draw (1, -0.33) [->] to (1,1);
\draw (0,0.33) [-<-]  to [out=270,in=180] (0.5,0) to [out=0,in=90] (1,-0.33);
\draw (0,1.25) node {$j$};
\draw (0,-1.25) node {$i$};
\draw (1,1.25) node {$i$};
\draw (0.9,-1.25) node {$j$};
\draw (0.5,0.3) node {$j-i$};
\draw (1, -0.33) node[circle,fill,inner sep=1pt]{};
\draw (0, 0.33) node[circle,fill,inner sep=1pt]{};
\draw (0,-0.7) node[circle,fill,inner sep=1pt]{};
\draw (-0.2,-0.7) node {$c$};
\draw (1,-0.7) node[circle,fill,inner sep=1pt]{};
\draw (1.2,-0.7) node {$d$};
\draw (0,0.7) node[circle,fill,inner sep=1pt]{};
\draw (-0.2,0.7) node {$a$};
\draw (1,0.7) node[circle,fill,inner sep=1pt]{};
\draw (1.2,0.7) node {$b$};
\draw (0.5,-1.8) node {$G_1$};
\draw (1.25,-0.33) node {$v_1$};
\draw (-0.25,0.33) node {$v_2$};
\end{tikzpicture}
 \quad\quad\quad
\begin{tikzpicture}[baseline=-0.65ex, thick, scale=1.4]
\draw (0,-1) [->-] to [out=90,in=270] (0.5,-0.33);
\draw (0.5, -0.33) [->-] to [out=90,in=270] (0.5,0.33);
\draw (0.5, 0.33) [->] to [out=90,in=270] (0,1);
\draw (1,-1) [->-] to [out=90,in=270] (0.5,-0.33);
\draw (1,1) [<-] to [out=270,in=90] (0.5,0.33);
\draw (0, -1.25) node {$i$};
\draw (0.9,-1.25) node {$j$};
\draw (0,1.25) node {$j$};
\draw (1,1.25) node {$i$};
\draw (1,0) node {$i+j$};
\draw (0.5, -0.33) node[circle,fill,inner sep=1pt]{};
\draw (0.5, 0.33) node[circle,fill,inner sep=1pt]{};
\draw (0.5,-1.8) node {$G_2$};
\draw (0.93,-0.8) node[circle,fill,inner sep=1pt]{};
\draw (0.13,0.75) node[circle,fill,inner sep=1pt]{};
\draw (0.87,0.75) node[circle,fill,inner sep=1pt]{};
\draw (0.07,-0.8) node[circle,fill,inner sep=1pt]{};
\draw (1.1,0.75) node{$b$};
\draw (-0.1,0.75) node{$a$};
\draw (-0.1,-0.75) node{$c$};
\draw (1.1,-0.75) node{$d$};
\draw (0.7,-0.33) node{$v_1$};
\draw (0.7,0.33) node{$v_2$};
\end{tikzpicture}
\end{align*}

Proposition \ref{BalancedWeightCount}, which claims that the weighted number of spanning trees is independent of the choice of the root for a balanced weight, enables us to further simplify our argument. In each of $G, G_1, G_2$, we choose $b$ to be the root and analyze the shape of the corresponding spanning trees. 

An oriented spanning tree of $G$ rooted at $b$ must contain the edge pointing to $c$, the edge pointing to $d$, the edge pointing out of $b$ and the edge $da$, which are highlighted in thick red as below.
\begin{align*}
\begin{tikzpicture}[baseline=-0.65ex, thick, scale=1][h!]
\draw [line width=0.7mm, red] (0.6,0.6) [->] -- (1, 1);
\draw (1,1) node[above]{$i$};
\draw [dotted] (-0.6,-0.6) -- (0.6,0.6);
\draw [line width=0.7mm, red] (-1, -1) -- (-0.6,-0.6);
\draw (-0.6,0.6) [->] to (-1,1)  node[above]{$j$};
\draw [line width=0.7mm, red] (1, -1) -- (-0.6,0.6);
\draw (0.6,0.6) node[circle,fill,inner sep=1pt]{};
\draw (-0.6,0.6) node[circle,fill,inner sep=1pt]{};
\draw (0.6,-0.6) node[circle,fill,inner sep=1pt]{};
\draw (-0.6,-0.6) node[circle,fill,inner sep=1pt]{};
\draw (0.9,0.6)  node{$b$};
\draw (-0.9,0.6)  node{$a$};
\draw (-0.9,-0.6)  node{$c$};
\draw (0.9,-0.6)  node{$d$};
\end{tikzpicture}
\end{align*}

An oriented spanning tree of $G_1$ rooted at $b$
must contain the edge pointing to $c$, the edge pointing to $d$, the edge $dv_1$, the edge $v_2a$, the edge pointing out of $b$ and either one of the edges in the following 2 cases:
\begin{itemize}
\item [(A)] the edge $v_1v_2$
\item [(B)] the edge $cv_2$,
\end{itemize}
which are highlighted in thick red as below.

\begin{align*}
\begin{tikzpicture}[baseline=-0.65ex, thick, scale=1.5][h!]
\draw [dotted] (0,-0.7) [->-] to (0, 0.33);
\draw [line width=0.7mm, red] (0,-1) -- (0, -0.7);
\draw [line width=0.7mm, red] (0, 0.33) -- (0,0.7);
\draw (0, 0.7) [->] to (0,1);
\draw [line width=0.7mm, red] (1,-0.7) [->-] to (1, -0.33);
\draw [line width=0.7mm, red] (1,-1) -- (1, -0.7);
\draw [dotted](1, -0.33) -- (1,0.7);
\draw [line width=0.7mm, red] (1, 0.7) [->] to (1,1);
\draw [line width=0.7mm, red] (0,0.33) [-<-]  to [out=270,in=180] (0.5,0) to [out=0,in=90] (1,-0.33);
\draw (0,1.25) node {$j$};
\draw (0,-1.25) node {$i$};
\draw (1,1.25) node {$i$};
\draw (0.9,-1.25) node {$j$};
\draw (0.5,0.3) node {$j-i$};
\draw (1, -0.33) node[circle,fill,inner sep=1pt]{};
\draw (0, 0.33) node[circle,fill,inner sep=1pt]{};
\draw (0,-0.7) node[circle,fill,inner sep=1pt]{};
\draw (-0.2,-0.7) node {$c$};
\draw (1,-0.7) node[circle,fill,inner sep=1pt]{};
\draw (1.2,-0.7) node {$d$};
\draw (0,0.7) node[circle,fill,inner sep=1pt]{};
\draw (-0.2,0.7) node {$a$};
\draw (1,0.7) node[circle,fill,inner sep=1pt]{};
\draw (1.2,0.7) node {$b$};
\draw (0.5,-1.8) node {(A)};
\draw (1.25,-0.33) node {$v_1$};
\draw (-0.25,0.33) node {$v_2$};
\end{tikzpicture}\quad\quad\quad
\begin{tikzpicture}[baseline=-0.65ex, thick, scale=1.5]
\draw [line width=0.7mm, red] (0,-0.7) [->-] to (0, 0.33);
\draw [line width=0.7mm, red] (0,-1) -- (0, -0.7);
\draw [line width=0.7mm, red] (0, 0.33) -- (0,0.7);
\draw (0, 0.7) [->] to (0,1);
\draw [line width=0.7mm, red] (1,-0.7) [->-] to (1, -0.33);
\draw [line width=0.7mm, red] (1,-1) -- (1, -0.7);
\draw [dotted](1, -0.33) -- (1,0.7);
\draw [line width=0.7mm, red] (1, 0.7) [->] to (1,1);
\draw [dotted] (0,0.33) [-<-]  to [out=270,in=180] (0.5,0) to [out=0,in=90] (1,-0.33);
\draw (0,1.25) node {$j$};
\draw (0,-1.25) node {$i$};
\draw (1,1.25) node {$i$};
\draw (0.9,-1.25) node {$j$};
\draw (0.5,0.3) node {$j-i$};
\draw (1, -0.33) node[circle,fill,inner sep=1pt]{};
\draw (0, 0.33) node[circle,fill,inner sep=1pt]{};
\draw (0,-0.7) node[circle,fill,inner sep=1pt]{};
\draw (-0.2,-0.7) node {$c$};
\draw (1,-0.7) node[circle,fill,inner sep=1pt]{};
\draw (1.2,-0.7) node {$d$};
\draw (0,0.7) node[circle,fill,inner sep=1pt]{};
\draw (-0.2,0.7) node {$a$};
\draw (1,0.7) node[circle,fill,inner sep=1pt]{};
\draw (1.2,0.7) node {$b$};
\draw (0.5,-1.8) node {(B)};
\draw (1.25,-0.33) node {$v_1$};
\draw (-0.25,0.33) node {$v_2$};
\end{tikzpicture}
\end{align*}

An oriented spanning tree of $G_2$ rooted at $b$
must contain the edge pointing to $c$, the edge pointing to $d$, the edge $v_1v_2$, the edge $v_2a$, the edge pointing out of $b$ and either one of the edges in the following 2 cases:
\begin{itemize}
\item [($\alpha$)] the edge $dv_1$,
\item [($\beta$)] the edge $cv_1$,
\end{itemize}
which are highlighted in thick red as below.

\begin{align*}
\begin{tikzpicture}[baseline=-0.65ex, thick, scale=1.4][h!]
\draw [line width=0.7mm, red] (0,-1) to [out=90,in=225] (0.13,-0.75);
\draw [dotted] (0.13,-0.75) [->-] to [out=45,in=270] (0.5,-0.33);
\draw [line width=0.7mm, red] (0.5, -0.33) [->-] to [out=90,in=270] (0.5,0.33);
\draw [line width=0.7mm, red] (0.5, 0.33)  to [out=90,in=315] (0.13,0.75);
\draw  (0.13,0.75) [->] to [out=135,in=270] (0,1);
\draw [line width=0.7mm, red] (1,-1) [->-] to [out=90,in=270] (0.5,-0.33);
\draw [line width=0.7mm, red] (0.87,0.75) [->] to [out=45,in=270] (1, 1);
\draw [dotted] (0.5, 0.33)  to [out=90,in=225] (0.87,0.75);
\draw (0, -1.25) node {$i$};
\draw (0.9,-1.25) node {$j$};
\draw (0,1.25) node {$j$};
\draw (1,1.25) node {$i$};
\draw (1,0) node {$i+j$};
\draw (0.5, -0.33) node[circle,fill,inner sep=1pt]{};
\draw (0.5, 0.33) node[circle,fill,inner sep=1pt]{};
\draw (0.5,-1.8) node {$(\alpha)$};
\draw (0.93,-0.8) node[circle,fill,inner sep=1pt]{};
\draw (0.13,0.75) node[circle,fill,inner sep=1pt]{};
\draw (0.87,0.75) node[circle,fill,inner sep=1pt]{};
\draw (0.13,-0.75) node[circle,fill,inner sep=1pt]{};
\draw (1.1,0.75) node{$b$};
\draw (-0.1,0.75) node{$a$};
\draw (0,-0.65) node{$c$};
\draw (1.1,-0.75) node{$d$};
\draw (0.7,-0.33) node{$v_1$};
\draw (0.7,0.33) node{$v_2$};
\end{tikzpicture}\quad\quad\quad
\begin{tikzpicture}[baseline=-0.65ex, thick, scale=1.4]
\draw [line width=0.7mm, red] (0,-1) to [out=90,in=225] (0.13,-0.75);
\draw [line width=0.7mm, red] (0.13,-0.75) [->-] to [out=45,in=270] (0.5,-0.33);
\draw [line width=0.7mm, red] (0.5, -0.33) [->-] to [out=90,in=270] (0.5,0.33);
\draw [line width=0.7mm, red] (0.5, 0.33)  to [out=90,in=315] (0.13,0.75);
\draw  (0.13,0.75) [->] to [out=135,in=270] (0,1);
\draw [line width=0.7mm, red] (1,-1) to [out=90,in=315] (0.87,-0.75);
\draw [dotted] (0.87,-0.75) to [out=135,in=270] (0.5, -0.33);
\draw [line width=0.7mm, red] (0.87,0.75) [->] to [out=45,in=270] (1, 1);
\draw [dotted] (0.5, 0.33)  to [out=90,in=225] (0.87,0.75);
\draw (0, -1.25) node {$i$};
\draw (0.9,-1.25) node {$j$};
\draw (0,1.25) node {$j$};
\draw (1,1.25) node {$i$};
\draw (1,0) node {$i+j$};
\draw (0.5, -0.33) node[circle,fill,inner sep=1pt]{};
\draw (0.5, 0.33) node[circle,fill,inner sep=1pt]{};
\draw (0.5,-1.8) node {$(\beta)$};
\draw (0.87,-0.75) node[circle,fill,inner sep=1pt]{};
\draw (0.13,0.75) node[circle,fill,inner sep=1pt]{};
\draw (0.87,0.75) node[circle,fill,inner sep=1pt]{};
\draw (0.13,-0.75) node[circle,fill,inner sep=1pt]{};
\draw (1.1,0.75) node{$b$};
\draw (-0.1,0.75) node{$a$};
\draw (0,-0.65) node{$c$};
\draw (1,-0.65) node{$d$};
\draw (0.7,-0.33) node{$v_1$};
\draw (0.7,0.33) node{$v_2$};
\end{tikzpicture}
\end{align*}

Note that an oriented spanning tree $T$ of $G$ rooted at $b$ corresponds to a unique tree $T_1$ of type (A) in $G_1$ and a unique tree $T_2$ of type ($\alpha$) in $G_2$, and vice versa. Hence,
$$\mathcal{T}_b(G) \xleftrightarrow{1:1} \{ \text{type (A) in } \mathcal{T}_b(G_1) \} \xleftrightarrow{1:1} \{ \text{type ($\alpha$) in } \mathcal{T}_b(G_2) \} .$$
Under this correspondence, we can check that
$$c(T)=\frac{-1}{i\cdot j} c(T_1)+\frac{1}{i \cdot (i+j)}c(T_2).$$

Similarly, an oriented spanning tree $T'_1$ of type (B) in $G_1$ corresponds to a unique tree $T'_2$ of type ($\beta$) in $G_2$, and vice versa.  Hence,
$$\{ \text{type (B) in } \mathcal{T}_b(G_1) \} \xleftrightarrow{1:1} \{ \text{type ($\beta$) in } \mathcal{T}_b(G_2) \}.$$
Under this correspondence, we have
$$\frac{-1}{i\cdot j} c(T'_1)+\frac{1}{i \cdot (i+j)}c(T'_2)=0.$$

Finally, we sum up over all trees and apply the above two identities on their weights to obtain the desired equality.  This completes the proof.
\end{proof}

Now we are ready to prove our main theorem.

\begin{proof}[Proof of Theorem \ref{MainTheorem}]
The key observation is that the skein relations in Proposition \ref{skeinrelation} for $N(g,c)$ is the same as the ones for $\Delta_{(g, c)}=\Delta_{(G, c)}(1)$, obtained by substituting $t=1$ in \cite[Theorem 4.1 (iv)]{BW}. Note that Theorem \ref{MainTheorem} for plane MOY graphs has been proved in Theorem \ref{Correspondence}, and a general MOY graph diagram can be related to plane graphs by a finite number of skein relations.  It follows by induction that Theorem \ref{MainTheorem} holds for arbitrary MOY graphs.
\end{proof}

We conclude this section by proving Corollary \ref{Nonvanishing}.  This follows directly from the following two lemmas, since existence of spanning trees implies positivity of weighted number of spanning trees when the weight function is positive.

\begin{lemma} A connected directed graph with a balanced positive weight is strongly connected, i.e., every vertex is reachable from every other vertex by a directed path.
\end{lemma}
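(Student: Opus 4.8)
The plan is to argue by contradiction, exploiting the fact that a balanced weight behaves like a circulation, so that the total weight flowing into any set of vertices equals the total weight flowing out of it. First I would establish this cut identity. Recall that the balanced condition reads $\sum_{e \text{ into } v} w(e) = \sum_{e \text{ out of } v} w(e)$ at every vertex $v$. For any subset $S \subsetneq V$, summing this equation over all $v \in S$ causes the contributions of edges with both endpoints in $S$ to cancel (each such edge is counted once as an inflow and once as an outflow), leaving
$$\sum_{e \text{ into } S} w(e) = \sum_{e \text{ out of } S} w(e),$$
where ``into $S$'' denotes edges with head in $S$ and tail outside $S$, and ``out of $S$'' denotes edges with tail in $S$ and head outside $S$.

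Next, suppose for contradiction that $\Gamma$ is not strongly connected. Then there is a vertex $u$ from which some vertex of $\Gamma$ is not reachable. Let $S$ be the set of all vertices reachable from $u$ by a directed path, with $u \in S$. Then $S$ is a proper nonempty subset of $V$, and by construction $S$ has no outgoing edges: any edge leaving a vertex of $S$ must terminate at a vertex still reachable from $u$, hence in $S$. Consequently $\sum_{e \text{ out of } S} w(e) = 0$.

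Finally, I would apply the cut identity to conclude $\sum_{e \text{ into } S} w(e) = 0$ as well. Since the weight $w$ is positive, every summand here is strictly positive, so the vanishing of the sum forces there to be no edges into $S$ either. Thus no edge connects $S$ to $V \setminus S$ in either direction, contradicting the fact that $\Gamma$ is connected as an undirected graph (per the standing convention that all graphs are connected). This contradiction shows that every vertex is reachable from $u$, and as $u$ was arbitrary, $\Gamma$ is strongly connected.

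The argument requires no hard computation; its only real content is the cut identity of the first step, namely the observation that a balanced weight is a circulation and hence conserves flow across every vertex cut. The remaining steps are routine reachability bookkeeping, so I do not anticipate a serious obstacle.
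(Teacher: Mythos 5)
Your proof is correct and follows essentially the same route as the paper: both arguments take $S$ to be the set of vertices reachable from a given vertex and use the balance condition summed over $S$ (your explicit cut identity) together with positivity and connectivity to derive a contradiction. The only cosmetic difference is that you contradict connectedness while the paper contradicts the maximality of $S$; the mathematical content is identical.
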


\begin{proof}
Suppose $\Gamma$ is a connected graph with a positive balanced weight $c$.  Given a vertex $v\in V$, let $S \subset V$ be the set of all vertices that can be reached from $v$. If $S$ is a proper subset of $V$, then $V-S$ is not empty.  As $\Gamma$ is a connected graph, there must be edges that connect vertices in $S$ with vertices in $V-S$. Let $F$ be the set of such edges.  Applying the positivity and the balanced condition (\ref{balancedcoloring}) of $c$ on all vertices in $S$, we can further see that there must be some edges in $F$ that are oriented from some vertices in $S$ to some vertices in $V-S$.  Then there is a vertex in $V-S$ which is also reachable from $v$ by a directed path.  This contradicts the definition of $S$, so we must have $S=V$.  

\end{proof}

\begin{lemma}
Every strongly connected graph has an oriented spanning tree with any given root.
\end{lemma}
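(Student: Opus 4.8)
The plan is to construct the required oriented spanning tree directly by a greedy growing procedure, using only the reachability that strong connectivity provides. Fix the prescribed root $r$. Since $\Gamma$ is strongly connected, every vertex is reachable from $r$ by a directed path; this is the only consequence of the hypothesis I will use, and in fact reachability from $r$ alone would suffice.

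I would grow the tree one vertex at a time. Maintain a subgraph $T$ together with its vertex set $S$, initialized so that $T$ has no edges and $S=\{r\}$. Throughout, I keep the invariant that $T$ is an oriented tree rooted at $r$ spanning exactly the vertices in $S$: every vertex of $S\setminus\{r\}$ has in-degree $1$ in $T$, the root $r$ has in-degree $0$, and $T$ contains no oriented cycle. The invariant clearly holds at the start, when $T$ is edgeless and $S=\{r\}$.

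For the inductive step, suppose $S\neq V$ and pick any vertex $w\in V\setminus S$. By reachability there is a directed path from $r$ to $w$; as this path begins in $S$ and ends outside $S$, it must traverse some edge $e=(u,v)$ with $u\in S$ and $v\notin S$. I add $e$ to $T$ and $v$ to $S$. Because $v$ was previously absent from $S$, it had in-degree $0$ in $T$ and now acquires in-degree exactly $1$, while all other in-degrees are unchanged; moreover no oriented cycle can appear, since $e$ terminates at the freshly added vertex $v$, which has no outgoing edge of $T$. Hence the invariant is preserved and $|S|$ strictly increases. After finitely many steps $S=V$, and the resulting $T$ meets all three conditions defining an oriented spanning tree rooted at $r$.

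The only genuinely substantive point is the existence, at each stage, of an edge $e$ crossing from $S$ to $V\setminus S$; this is exactly where the hypothesis enters, through the path-crossing argument. Everything else---the in-degree bookkeeping and the acyclicity---is automatic once one observes that each newly added edge points into a previously untouched vertex, so I do not anticipate any real obstacle beyond phrasing the crossing-edge claim cleanly.
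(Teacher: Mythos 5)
Your proof is correct and is essentially the explicit, constructive rendering of the paper's one-line argument: the paper takes a maximal oriented tree rooted at $r$ and notes it must be spanning, which amounts to the same crossing-edge extension step you carry out inductively. Your observation that only reachability from $r$ (rather than full strong connectivity) is needed is accurate but does not change the substance.
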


\begin{proof}
This is a standard result in graph theory.  For any given root $r$, simply take a maximal oriented tree rooted at $r$.  Such a tree must be spanning by the strongly connected assumption.
\end{proof}


\begin{thebibliography}{10}

\bibitem{bao}
{\sc Y. Bao}, {\em Floer homology and embedded bipartite graphs},
 arXiv:1401.6608v4,  (2018).


\bibitem{BW}{\sc Y. Bao and Z. Wu}, {\it An Alexander polynomial for MOY graphs}, Selecta Math. (N.S) 26 (2020), Article number: 32

\bibitem{Ch}{\sc S. Chaiken}, {\it A combinatorial proof of the all minors matrix tree theorem}, SIAM J. Alg. Disc. Meth. 3 (1982), pp.~319--329.




























\bibitem{Tutte}
{\sc W. Tutte}, {\em The Dissection of Equilateral Triangles into Equilateral Triangels},  Math. Proc. Cambridge Philos. Soc. 44 (1948), pp. ~463--482



\end{thebibliography}
\end{document}